\newtheorem{theorem}{Theorem}[section]
\newtheorem{lemma}[theorem]{Lemma}
\newtheorem{proposition}[theorem]{Proposition}
\newtheorem{corollary}[theorem]{Corollary}
\newtheorem{definition}[theorem]{Definition}
\theoremstyle{remark}
\newtheorem{remark}{Remark}
\def\pq{(\!(q)\!)}
\def\bq{[\![q]\!]}
\author{Oleg Lazarev}
\address{Department of Mathematics, Stanford University, Stanford, CA 94305, USA}
\email{olazarev@math.stanford.edu}
\author{Matthew S. Mizuhara}
\address{Department of Mathematics, Penn State University, State College, PA 16802, USA}
\email{msm344@psu.edu}
\author{Benjamin Reid}
\address{Department of Mathematics, University of Oregon, Eugene, OR, 97403, USA}
\email{bwr@uoregon.edu}
\author{Holly Swisher}
\address{Department of Mathematics, Oregon State University, Corvallis, OR 97331, USA}
\email{swisherh@math.oregonstate.edu}
\title{An extension of a proof of the Ramanujan congruences for multipartitions}
\title[A proof of the Ramanujan congruences for multipartitions]{extension of a proof of the Ramanujan congruences for multipartitions}
\thanks{The authors were supported in part by the NSF grant DMS-0852030.}
\subjclass[2010]{Primary 11P83}
\keywords{partitions, multipartitions, Ramanujan, congruences}
\begin{document}

\maketitle

\begin{abstract}
Recently Lachterman, Schayer, and Younger published {an} elegant proof of the Ramanujan congruences for the partition function $p(n)$.  Their proof uses only the classical theory of modular forms as well as a beautiful result of Choie, Kohnen, and Ono, without need for Hecke operators.  In this paper we give a method for generalizing Lachterman, Schayer, and Younger's proof to include Ramanujan congruences for multipartition functions $p_k(n)$, and Ramanujan congruences for $p(n)$ modulo certain prime powers.    
\end{abstract}

\section{Introduction and Statement of Results}

The subject of partitions has a long fascinating history, including connections to many areas of mathematics, and mathematical physics.  For example, \cite{Andrews} and \cite{A-O-Notices} provide a glimpse into the history of partitions.  In particular, the generalization of partitions to $k$-component multipartitions, also known as $k$-colored partitions, is a rich subject in its own right (see \cite{Andrews-Multipartitions} for a nice survey).  We will begin by reviewing partitions and multipartitions.
 
\subsection{Partitions}
We recall that a {\it{partition}} of a positive integer $n$ is defined as a nonincreasing sequence of positive integers called {\it{parts}}, that sum to $n$ (often written as a sum).  For $n=0$ we define the empty set as the unique ``empty partition" of $0$.  We write  $|\lambda|=n$ to denote that $\lambda$ is a partition of $n$.  For example, the following gives all the partitions $\lambda$ such that $|\lambda|=5$:
$$
5 = 4+1 = 3+2 = 3+1+1 = 2+2+1 = 2+1+1+1= 1+1+1+1+1.
$$

\noindent The {\it{partition function}} $p(n)$ counts the total number of partitions of $n$.  In order to define $p(n)$ on all integers we further define that $p(n)=0$ when $n<0$.  We see from our example above that $p(5)=7$.

The generating function for $p(n)$ has the following infinite product form, due to Euler:
$$
\sum_{n=0}^\infty p(n)q^n=\prod_{n=1}^\infty\frac{1}{(1-q^n)}.
$$

\subsection{Multipartitions}\label{multipartitions}
One natural generalization of partitions is by the following.  Define \color{black} a {\it{$k$-component multipartition}} of a nonnegative integer $n$ to be a $k$-tuple of partitions $\lambda=(\lambda_1, \ldots, \lambda_k)$ such that 
$$
\sum_{i=1}^k|\lambda_i|=n.
$$
We will write $|\lambda|_k = n$ if $\lambda$ is a $k$-component multipartition of $n$.  The following gives all the $2$-component multipartitions of $3$, i.e., all $\lambda$ such that $|\lambda|_2 = 3$:
$$
(3,\emptyset), (2+1, \emptyset), (1+1+1, \emptyset), (2,1), (1+1,1), (1,2), (1,1+1), (\emptyset,3), (\emptyset, 2+1), (\emptyset, 1+1+1).
$$
We define $p_k(n)$ as the number of $k$-component multipartitions of $n$, again defining $p_k(n)=0$ for $n<0$.  From our example above we see that $p_2(3)=10$.
{\begin{remark}  We note that ordering matters in this definition, in that a rearrangement of components $\lambda_i$ may yield a distinct multipartition.  In addition, we draw attention to the fact that since the empty set is a partition of $0$ some $\lambda_i$ may equal $\emptyset$.
\end{remark} }

The generating function for $p_k(n)$ follows from the generating function for $p(n)$ by taking the $k$th power of the product form.  Namely,
\begin{equation}\label{genfn}
\sum_{n=0}^\infty p_k(n)q^n=\prod_{n=1}^\infty\frac{1}{(1-q^n)^k}.
\end{equation}

\subsection{Ramanujan Congruences}
{Among} the most celebrated results in partition theory {are} the following congruences of Ramanujan for $p(n)$.  For all integers $n\geq 0,$
\begin{eqnarray*}
p(5n+4) & \equiv & 0 \pmod 5\\
p(7n+5) & \equiv & 0 \pmod 7\\
p(11n+6) & \equiv & 0 \pmod {11}.\\
\end{eqnarray*}
 
Work of Ono and Ahlgren \cite{Ono-distribution}, \cite{A-O}, \cite{Ahlgren} has shown that for any $m$ coprime to $6$ there exist infinitely many nonnested arithmetic progressions for which $p(an+b)\equiv 0 \pmod {m}$.  However, it has been shown by Ahlgren and Boylan \cite{A-B} that the three Ramanujan congruences above are the only congruences for $p(n)$ of the form 
$$
p(\ell n+b) \equiv 0 \pmod{\ell},
$$
for $\ell$ prime.

Many congruences for multipartitions have been proven as well, see Andrews \cite{Andrews-Multipartitions}, Gandhi \cite{Gandhi}, Atkin \cite{Atkin}, and Kiming and Olsson \cite{K-O}.  Recent work by Folsom, Kent, and Ono \cite{F-K-O} discusses {some of} the underlying reasons for such congruences.

Using notation of Kiming and Olsson \cite{K-O}, for a prime $\ell\geq 5$, an integer $a$, and positive integers $k,m$, we say {\it{there is a congruence at}} $(\ell^m,k,a)$ if for all $n\geq 0$, 
$$
p_k(\ell^mn+a) \equiv 0 \pmod{\ell^m}.
$$

There have been a number of proofs of the Ramanujan congruences.  The proof by Lachterman, Schayer, and Younger in \cite{LSY} is notable because it uses only the classical theory of modular forms, without relying on Hecke operators.  In this paper we extend their proof to include Ramanujan congruences for multipartitions modulo prime powers.  In particular, we prove the following theorem.

\begin{definition}
Let $\ell\geq 5$ be prime, and let $k,m$ be positive integers.  Then we define
\[
\delta_{k,\ell,m}:= \frac{k(\ell^{2m}-1)}{24}.
\]
{We note that the $\delta_{k,\ell,m}$ are all positive integers,} since $\ell^2\equiv 1 \pmod{24}$ for all primes $\ell\geq 5$.  
\end{definition}

\begin{theorem}\label{main}
Let $\ell\geq 5$ be prime, and let $k,m$ be positive integers such that $k\equiv -4 \pmod{\ell^{m-1}}$.  If {for each $1\leq r <m$ there is a congruence at $(\ell^r,k,-\delta_{k,\ell,m})$,} then there is a congruence at $(\ell^m,k,-\delta_{k,\ell,m})$ if and only if 
\[
p_k(\ell^mn-\delta_{k,\ell,m})\equiv 0 \pmod{\ell^m}
\]
for all $0\leq n < \frac{k \ell^m +2\ell+2}{24}$.
\end{theorem}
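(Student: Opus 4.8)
The plan is to follow the Lachterman--Schayer--Younger strategy: encode the values $p_k(\ell^m n-\delta_{k,\ell,m})$ as the $q$-expansion coefficients of a single holomorphic modular form, and then invoke a Sturm-type valence bound, so that the vanishing of finitely many coefficients forces the vanishing of all of them. First I would rewrite the generating function \eqref{genfn} in terms of the Dedekind eta function, $\sum_n p_k(n)q^n = q^{k/24}\eta(\tau)^{-k}$, so that the target progression $n\equiv -\delta_{k,\ell,m}\pmod{\ell^m}$ corresponds to extracting the sub-series $\sum_n p_k(\ell^m n-\delta_{k,\ell,m})\,q^{\ell^m n-\delta_{k,\ell,m}}$ of $q^{k/24}\eta(\tau)^{-k}$; the offset $\delta_{k,\ell,m}=k(\ell^{2m}-1)/24$ is chosen precisely so that this sub-series begins at an integral power of $q$. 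Note that when $m=1$ both standing hypotheses are vacuous, and the statement reduces to the multipartition analogue of the original mod-$\ell$ argument, which I would establish first as a base case.

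The key step is to show that this sub-series is congruent modulo $\ell^m$ to the $q$-expansion of a \emph{holomorphic} modular form $F$ of an explicit weight $w$ on $\Gamma_0(\ell)$, with $\ell$-integral coefficients. To isolate the residue class I would project using the theta operator $\Theta=q\,\frac{d}{dq}$, whose effect on coefficients modulo $\ell$ is multiplication by $n$; combined with the result of Choie--Kohnen--Ono, suitable combinations of $\Theta^j$ realize the projection onto the relevant residue classes as a modular form mod $\ell$ of controlled filtration, avoiding Hecke operators entirely. To upgrade this from a congruence mod $\ell$ to one mod $\ell^m$ I would use the Frobenius congruences $\eta(\tau)^{\ell^j}\equiv\eta(\ell^j\tau)$ and the ``freshman's dream'' $f(q)^{\ell^j}\equiv f(q^{\ell^j})$, tracking the powers of $\ell$ in the error terms. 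This is exactly where the two standing hypotheses enter: writing $-k=4-\ell^{m-1}t$, the condition $k\equiv -4\pmod{\ell^{m-1}}$ splits $\eta^{-k}$ as $\eta^{4}\bigl(\eta^{\ell^{m-1}}\bigr)^{-t}$, so that the ``hard'' factor is governed by the Frobenius congruence modulo $\ell^{m-1}$ while the residual weight-$2$ factor $\eta^4$ lands in an integral weight; and the assumed congruences at $(\ell^r,k,-\delta_{k,\ell,m})$ for $r<m$ let me subtract off the contributions that are only known to vanish to lower $\ell$-order, leaving a series that genuinely reduces to $F$ modulo the full prime power $\ell^m$. The Eisenstein series $E_{\ell-1}\equiv 1\pmod{\ell}$ may be used to push $w$ into the correct range without disturbing the reduction.

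Once $F$ is in hand, I would apply the Sturm bound on $\Gamma_0(\ell)$: a holomorphic modular form of weight $w$ whose coefficients through index $\tfrac{w(\ell+1)}{12}$ all vanish modulo $\ell^m$ must vanish modulo $\ell^m$ identically. Translating the Sturm cutoff back through the substitution $q^{\ell^m n-\delta_{k,\ell,m}}\mapsto q^{n}$ then works out to the range $0\le n<\frac{k\ell^m+2\ell+2}{24}$, with the summand $2\ell+2=2(\ell+1)$ reflecting the index $[\mathrm{SL}_2(\mathbb{Z}):\Gamma_0(\ell)]=\ell+1$. The forward (``only if'') direction of Theorem \ref{main} is then immediate, since a congruence at $(\ell^m,k,-\delta_{k,\ell,m})$ valid for all $n\ge 0$ certainly holds on the finite range; the substantive (``if'') direction is the Sturm argument, where finitely many vanishing coefficients force $F\equiv 0\pmod{\ell^m}$, hence $p_k(\ell^m n-\delta_{k,\ell,m})\equiv 0\pmod{\ell^m}$ for every $n$.

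The hard part will be the mod-$\ell^m$ lift in the second step. Modulo $\ell$ the theta-operator projection and the Frobenius congruence for $\eta$ are clean, but propagating them to modulus $\ell^m$ requires the exact $\ell$-adic valuations of the binomial error terms, together with the verification that the resulting series is not merely congruent to a modular form coefficientwise but is the reduction of a genuinely holomorphic form of the claimed weight and level. Confirming holomorphy at the cusps of $\Gamma_0(\ell)$ and pinning down the exact weight $w$ --- so that the Sturm cutoff reproduces the stated bound --- is the delicate point, and it is precisely for this bookkeeping that the hypotheses $k\equiv-4\pmod{\ell^{m-1}}$ and the lower-level congruences at $(\ell^r,k,-\delta_{k,\ell,m})$ are assumed.
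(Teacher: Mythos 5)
Your outline follows the ``standard'' route (theta operators, forms on $\Gamma_0(\ell)$, a Sturm bound), which is precisely the machinery the Lachterman--Schayer--Younger argument --- and its extension in this paper --- is built to avoid; more importantly, the central object of your argument is never produced. You assert that a holomorphic form $F$ on $\Gamma_0(\ell)$ of some weight $w$, with $\ell$-integral coefficients, captures $p_k(\ell^m n-\delta_{k,\ell,m})$ modulo $\ell^m$, but the tools you name do not construct it: the operator $\Theta=q\,\frac{d}{dq}$ separates residue classes only modulo $\ell$ (via $\Theta^{\ell-1}$), not modulo $\ell^m$, and iterating it costs filtration in ways you do not control; and Theorem \ref{CKO} is the statement that $const\bigl(\frac{f\cdot g}{\Delta^{n+D(k)}\tilde{E}_k}\bigr)=0$ for $f\in M_{12n+14}$, $g\in M_k$ --- it has nothing to do with projecting onto arithmetic progressions, so invoking it for that purpose is a misuse. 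You explicitly defer the mod-$\ell^m$ lift, the holomorphy at the cusps, and the determination of $w$ to ``bookkeeping,'' but these \emph{are} the content of the theorem; without a value of $w$ the claim that the Sturm cutoff ``works out to'' $\frac{k\ell^m+2\ell+2}{24}$ is unverified, and your proposed roles for the hypotheses ($\eta^{-k}=\eta^4(\eta^{\ell^{m-1}})^{-t}$, subtracting lower-order contributions) are guesses that are not carried out.

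For contrast, the paper never leaves the full modular group. It sets $g=\Delta^{\delta_{k,\ell,m}}$, whose coefficients $\tau_{k,m}(n)$ are tied to $p_k(\ell^r n-\delta_{k,\ell,m})$ modulo $\ell^r$ by Lemma \ref{m-1}, and for each $n\geq \frac{k\ell^m+2\ell+2}{24}$ builds $f_n=E_{K_n}^{\ell^m}E_{\ell^{m-1}(\ell-1)}^{s_n}$ of weight $w_{k,\ell,m}(n)=12(n\ell^m-\delta_{k,\ell,m})+2$; the hypothesis $k\equiv-4\pmod{\ell^{m-1}}$ is exactly what makes this weight divisible by $\ell^{m-1}$, so that the factor $E_{\ell^{m-1}(\ell-1)}^{s_n}\equiv 1\pmod{\ell^m}$ can be split off. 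Theorem \ref{CKO} gives $const(f_n\cdot g/\Delta^{n\ell^m})=0$, and Lemma \ref{l-divisibility} applied to $(E_{K_n}/\Delta^n)^{\ell^m}$, together with the assumed congruences at $(\ell^r,k,-\delta_{k,\ell,m})$ for $r<m$ (which kill the terms with $i\geq 1$), reduces this to a triangular relation $\sum_{j=0}^{n}c_{0,n}(-j)\tau_{k,m}(j\ell^m)\equiv 0\pmod{\ell^m}$ with leading coefficient $c_{0,n}(-n)=1$, so the finitely many assumed vanishings propagate to all $n$ by induction --- no level structure, theta operators, or Sturm bound appear. As written, your proposal is a plan with its decisive steps missing rather than a proof.
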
 

\begin{remark}\label{rem1}
Note that {whenever  $1 \le r \le m$ we have $\delta_{k,\ell, m} \equiv \delta_{k,\ell, r} \pmod{\ell^r}$.  Thus there is a congruence at $(\ell^r,k,-\delta_{k,\ell,m})$ if and only if there is a congruence at $(\ell^r,k,-\delta_{k,\ell, r})$.  Here we are using the fact that $\delta_{k,\ell,r}$ is a positive integer for each $r$, and that we defined $p_k$ to take the value $0$ when evaluated at any negative integer.  More specifically, the condition that $p_k(\ell^r n -\delta_{k,\ell,r})\equiv 0 \pmod{\ell^r}$ for $0\leq n \leq N$ is equivalent to the condition that} {$p_k(\ell^r n -\delta_{k, \ell,m}) \equiv 0 \pmod{\ell^r}$ for $0 \leq n \leq N+ \frac{\delta_{k,\ell,m} - \delta_{k,\ell,r}}{\ell^r}$.} 
In particular, in Theorem \ref{main} {one can} replace the condition that there is a congruence at $(\ell^r,k,-\delta_{k,\ell,m})$ for all $1\leq r <m$  with the equivalent condition that there is a congruence at $(\ell^r,k,-\delta_{k,\ell,r})$ for all $1\leq r <m$. 
\end{remark}

{We observe that the $m=1$ case of Theorem \ref{main} has trivial conditions.  In particular it yields that when $\ell\geq 5$ is prime and $k$ is a positive integer, there is a  congruence at $(\ell, k, - \delta_{k,\ell,1})$ if $p_k(\ell n - \delta_{k,\ell,1})\equiv 0 \pmod{\ell}$ for all $0\leq n < \frac{k\ell + 2\ell + 2}{24}$.  We also observe that for a general positive integer $m$, the condition $k\equiv -4 \pmod{\ell^{m-1}}$ in Theorem \ref{main} implies that $k\equiv -4 \pmod{\ell^r}$ for any $1\leq r < m-1$ as well.   Thus, fixing a prime $\ell\geq 5$, and positive integers $k,m$ such that $k\equiv -4 \pmod{\ell^{m-1}}$, suppose we know for each $1\leq r \leq m$ that $p_k(\ell^r n - \delta_{k,\ell,r}) \equiv 0 \pmod {\ell^r}$ for all $0\leq n < \frac{k\ell^r + 2\ell + 2}{24}$.  Then, the $m=1$ case of Theorem \ref{main} implies that there is a congruence at $(\ell, k, - \delta_{k, \ell, 1})$, and so by Remark \ref{rem1} there is a congruence at $(\ell, k, - \delta_{k, \ell, 2})$.  Thus the conditions of the $m=2$ case of Theorem \ref{main} are satisfied, so we can conclude in fact that there is also a congruence at $(\ell^2, k, - \delta_{k, \ell, 2})$, and again by Remark \ref{rem1} a congruence at $(\ell^2, k, - \delta_{k, \ell, 3})$.  Continuing inductively, we obtain that there is in fact a congruence at $(\ell^r, k, - \delta_{k, \ell, r})$ for each $1\leq r \leq m$.  In particular, we have the following corollary which gives a finite condition for the existence of a congruence at $(\ell^m, k,-\delta_{k,\ell,m})$.
}

\begin{corollary}\label{nice}
Let $\ell\geq 5$ be prime, and let $m,k$ be positive integers such that $k\equiv -4 \pmod{\ell^{m-1}}$. If for each $1\leq r\leq m$,
\[
p_k(\ell^rn-\delta_{k,\ell,r})\equiv 0 \pmod{\ell^r},
\]
for all $0\leq n < \frac{k \ell^{r} + 2\ell +2}{24}$,
then there is a congruence at 
$(\ell^r, k, -\delta_{k,\ell, r})$ for all $1\le r\le m$. 
\end{corollary}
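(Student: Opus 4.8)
The plan is to deduce Corollary \ref{nice} from Theorem \ref{main} by a straightforward induction on $r$, showing that there is a congruence at $(\ell^r, k, -\delta_{k,\ell,r})$ for each $1 \le r \le m$. The mechanism is the following: the finite vanishing hypothesis assumed in the corollary for a given index $r$ is exactly the finite condition appearing on the right-hand side of the ``if and only if'' in the $m=r$ instance of Theorem \ref{main}, while the hypotheses of that instance concerning the lower levels $(\ell^j, k, -\delta_{k,\ell,r})$ for $1 \le j < r$ will be supplied by the conclusions of the earlier inductive steps after translating them through Remark \ref{rem1}.

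First I would dispose of the base case $r=1$. Here the $m=1$ instance of Theorem \ref{main} has vacuous lower-level hypotheses (there is no $j$ with $1 \le j < 1$), so it asserts directly that there is a congruence at $(\ell, k, -\delta_{k,\ell,1})$ precisely when $p_k(\ell n - \delta_{k,\ell,1}) \equiv 0 \pmod{\ell}$ for all $0 \le n < \frac{k\ell + 2\ell + 2}{24}$. This last condition is exactly the $r=1$ case of the corollary's hypothesis, which establishes the base case.

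For the inductive step, suppose $1 \le s < m$ and that a congruence at $(\ell^r, k, -\delta_{k,\ell,r})$ has already been established for all $1 \le r \le s$. I would then apply Theorem \ref{main} with $m$ replaced by $s+1$. Its running hypotheses are met: the arithmetic condition $k \equiv -4 \pmod{\ell^{s}}$ follows from the global hypothesis $k \equiv -4 \pmod{\ell^{m-1}}$, because $s \le m-1$ forces $\ell^{s} \mid \ell^{m-1}$; and for each $1 \le j < s+1$ the required congruence at $(\ell^j, k, -\delta_{k,\ell,s+1})$ is equivalent, by Remark \ref{rem1} applied with $m = s+1$, to the congruence at $(\ell^j, k, -\delta_{k,\ell,j})$ that is already in hand from the inductive hypothesis. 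The remaining finite condition $p_k(\ell^{s+1} n - \delta_{k,\ell,s+1}) \equiv 0 \pmod{\ell^{s+1}}$ for all $0 \le n < \frac{k\ell^{s+1} + 2\ell + 2}{24}$ is precisely the $r = s+1$ case of the corollary's hypothesis. Theorem \ref{main} then yields a congruence at $(\ell^{s+1}, k, -\delta_{k,\ell,s+1})$, completing the induction.

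The substantive content is entirely carried by Theorem \ref{main}; the only thing requiring care in assembling the corollary is the bookkeeping that matches the two different subscripts on $\delta$. Theorem \ref{main} naturally produces and consumes congruences phrased with the single shift $\delta_{k,\ell,s+1}$, whereas the corollary is stated in terms of the level-matched shifts $\delta_{k,\ell,r}$; Remark \ref{rem1}, together with the fact that $\delta_{k,\ell,m} \equiv \delta_{k,\ell,r} \pmod{\ell^r}$, is exactly what reconciles these, and it is the one point to verify carefully at each stage. I do not anticipate any genuine obstacle beyond this indexing, since no new analytic input is needed.
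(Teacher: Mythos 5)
Your proof is correct and follows essentially the same route as the paper: the authors likewise deduce Corollary \ref{nice} by iterating Theorem \ref{main} from $m=1$ upward, using Remark \ref{rem1} at each stage to convert the congruence at $(\ell^r, k, -\delta_{k,\ell,r})$ into one at $(\ell^r, k, -\delta_{k,\ell,r+1})$ so that the hypotheses of the next instance of the theorem are met. Your handling of the indexing on $\delta$ and of the condition $k\equiv -4 \pmod{\ell^{s}}$ matches their argument exactly.
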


\begin{remark}\label{rem3}
{In light of Remark \ref{rem1} one can rewrite the conditions in Corollary \ref{nice} as }
$p_k(\ell^rn-\delta_{k,\ell,m})\equiv 0 \pmod{\ell^r}$ for all {$0 \le n <  \frac{k \ell^{2m - r}+ 2\ell +2}{24}$.}
Likewise, the conclusion can be replaced with congruences at $(\ell^r, k, -\delta_{k,\ell, m})$  for all $1\le r\le m$.
\end{remark}

In Section \ref{prelim}, we establish some preliminaries which enable us to prove Theorem \ref{main}.  In addition {we prove a generalization of a lemma of Kiming and Olsson \cite{K-O} in Lemma \ref{KOgeneralization},} which combined with Corollary \ref{nice} and Remark \ref{rem3} gives the following {more general corollary to Theorem \ref{main}. }

\begin{corollary}\label{nice2}
Let $\ell\geq 5$ be prime, and let $m,k, k'$ be positive integers such $k'\equiv k \pmod{\ell^m}$ and $k'\equiv k \equiv -4 \pmod{\ell^{m-1}}$.  If for each $1\leq r\leq m$,
\[
p_k(\ell^rn-\delta_{k,\ell,r})\equiv 0 \pmod{\ell^r}
\]
for all $0\leq n < \frac{k \ell^r +2\ell+2}{24}$,
then there is a congruence at $(\ell^r, k', -\delta_{k',\ell, m})$ for all $1\le r\le m$. 
\end{corollary}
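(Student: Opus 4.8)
The plan is to reduce everything to the index $k$ via Corollary \ref{nice}, and then transfer the resulting congruences to $k'$ using the generalization of Kiming and Olsson's lemma, Lemma \ref{KOgeneralization}, whose content is precisely that a congruence of this shape depends on the number of components only modulo an appropriate power of $\ell$. Note that only the $k \to k'$ direction of that lemma is needed here.

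First I would apply Corollary \ref{nice} directly to $k$. The hypotheses are tailor-made for this: we are given $k \equiv -4 \pmod{\ell^{m-1}}$, and that $p_k(\ell^r n - \delta_{k,\ell,r}) \equiv 0 \pmod{\ell^r}$ for all $0 \leq n < \frac{k\ell^r + 2\ell + 2}{24}$ and each $1 \leq r \leq m$. Corollary \ref{nice} then yields a congruence at $(\ell^r, k, -\delta_{k,\ell,r})$ for every $1 \leq r \leq m$. Next I would rewrite these in the normalization used by the transfer lemma: by Remark \ref{rem1} (equivalently, the reformulation recorded in Remark \ref{rem3}), for each $r$ a congruence at $(\ell^r, k, -\delta_{k,\ell,r})$ is equivalent to a congruence at $(\ell^r, k, -\delta_{k,\ell,m})$, since $\delta_{k,\ell,m} \equiv \delta_{k,\ell,r} \pmod{\ell^r}$ and $p_k$ vanishes at negative integers. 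Thus we obtain a congruence at $(\ell^r, k, -\delta_{k,\ell,m})$ for all $1 \leq r \leq m$.

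Finally I would invoke Lemma \ref{KOgeneralization}. Because $k' \equiv k \pmod{\ell^m}$, and hence $k' \equiv k \pmod{\ell^r}$ for every $r \leq m$, the lemma lets us replace $k$ by $k'$ while adjusting the offset from $\delta_{k,\ell,m}$ to $\delta_{k',\ell,m}$, giving a congruence at $(\ell^r, k', -\delta_{k',\ell,m})$ for all $1 \leq r \leq m$, which is exactly the desired conclusion. The auxiliary hypothesis $k' \equiv -4 \pmod{\ell^{m-1}}$ is then automatic, as it follows from $k \equiv -4 \pmod{\ell^{m-1}}$ together with $k' \equiv k \pmod{\ell^m}$.

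The genuine mathematical work is isolated in Lemma \ref{KOgeneralization}, not in the corollary itself; the corollary is the bookkeeping that threads Corollary \ref{nice}, the reindexing of Remark \ref{rem1}/\ref{rem3}, and the transfer lemma together. I expect the main obstacle to be keeping the shift parameters aligned: the congruences produced by Corollary \ref{nice} are centered at $\delta_{k,\ell,r}$, the transfer lemma is most naturally phrased with the uniform offset $\delta_{k,\ell,m}$, and the target congruences are centered at $\delta_{k',\ell,m}$. Verifying at each step that the relevant $\delta$'s are positive integers congruent modulo the appropriate power of $\ell$—so that Remark \ref{rem1} applies verbatim to both $k$ and $k'$—is where the care is required.
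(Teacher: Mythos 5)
Your proposal is correct and matches the paper's intended argument exactly: the paper derives Corollary \ref{nice2} by combining Corollary \ref{nice} (giving congruences at $(\ell^r,k,-\delta_{k,\ell,r})$), Remark \ref{rem1}/\ref{rem3} (converting these to congruences at $(\ell^r,k,-\delta_{k,\ell,m})$), and Lemma \ref{KOgeneralization} with $k'=k+s\ell^m$. The one point to state precisely is that the lemma itself keeps the offset $-\delta_{k,\ell,m}$ fixed, and the passage to $-\delta_{k',\ell,m}$ is a final application of the Remark \ref{rem1} mechanism using $\delta_{k',\ell,m}-\delta_{k,\ell,m}=\frac{(k'-k)(\ell^{2m}-1)}{24}\equiv 0 \pmod{\ell^m}$ --- exactly the check you flag in your closing paragraph.
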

\begin{remark}
As in Remark \ref{rem3}, we can replace the conditions on 
$p_k(\ell^rn-\delta_{k,\ell,r})$ in Corollary \ref{nice2} by the appropriate conditions on $p_k(\ell^rn-\delta_{k,\ell,m})$ and the conclusion that there are congruences $(\ell^r, k', -\delta_{k',\ell, m})$ by congruences at 
$(\ell^r, k', -\delta_{k',\ell, r})$.
\end{remark}

In Section \ref{proof}, we prove Theorem \ref{main}, and in Section \ref{examples} we provide some examples of congruences for $p_k(n)$ proved by Theorem \ref{main}.  

\section{Preliminaries}\label{prelim}

Recall the ring of formal Laurent series in $q$ with integer coefficients defined by
\[
\mathbb{Z}\pq:= \left\{ f(q) = \sum_{n\geq n_0} a(n)q^n \mid n_0\in \mathbb{Z}, a(n)\in \mathbb{Z} \mbox{ for all }n\geq n_0 \right\}.
\]
The set of formal power series in $q$ with integer coefficients, denoted $\mathbb{Z}\bq$, is the subring of $\mathbb{Z}\pq$ obtained by fixing $n_0=0$ in the above definition.


\subsection{Reduction of power series modulo prime powers}\label{power}

\begin{proposition}\label{handy}
Let $f(q)\in \mathbb{Z}\pq$  (resp. $\mathbb{Z}\bq$).  For any integer $k\geq 0$, and prime $\ell$, 
\[
f(q^{\ell^k})^\ell = g(q^{\ell^{k+1}})+\ell\cdot h(q^{\ell^k}),
\]
where $g,h\in\mathbb{Z}\pq$ (resp. $\mathbb{Z}\bq$).
\end{proposition}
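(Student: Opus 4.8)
The plan is to reduce everything to the base case $k=0$, which asserts the ``Frobenius congruence'' $f(q)^\ell \equiv f(q^\ell) \pmod{\ell}$, and then to obtain the general statement by the substitution $q \mapsto q^{\ell^k}$. First I would treat $k=0$. Writing $f(q) = \sum_{n \geq n_0} a(n) q^n$, the power $f(q)^\ell$ is a well-defined element of $\mathbb{Z}\pq$ because the exponents appearing in $f$ are bounded below, so the coefficient of $q^N$ in $f(q)^\ell$, namely $\sum_{n_1 + \cdots + n_\ell = N} a(n_1)\cdots a(n_\ell)$, is a finite sum for each $N$.

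Next I would run the standard freshman's dream argument modulo $\ell$, phrased via an orbit count so that it applies directly to an infinite series. The cyclic group $\mathbb{Z}/\ell\mathbb{Z}$ acts on the index tuples $(n_1,\dots,n_\ell)$ with $\sum n_i = N$ by cyclic rotation, and since $\ell$ is prime every orbit has size $1$ or $\ell$. A size-$\ell$ orbit contributes $\ell$ copies of a single product (the product is orbit-invariant by commutativity of multiplication in $\mathbb{Z}$), hence contributes $0 \pmod{\ell}$; the size-$1$ orbits are the constant tuples $(n,\dots,n)$, which occur only when $N = \ell n$ and contribute $a(n)^\ell$. Applying Fermat's little theorem, $a(n)^\ell \equiv a(n) \pmod{\ell}$, so the coefficient of $q^N$ in $f(q)^\ell$ is $\equiv a(N/\ell) \pmod{\ell}$ when $\ell \mid N$ and $\equiv 0 \pmod{\ell}$ otherwise. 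This is exactly $f(q)^\ell \equiv f(q^\ell) \pmod{\ell}$. Setting $g = f$ and $h(q) := \frac{1}{\ell}\bigl(f(q)^\ell - f(q^\ell)\bigr)$, the integrality just established shows $h \in \mathbb{Z}\pq$ and yields the $k=0$ identity $f(q)^\ell = g(q^\ell) + \ell \cdot h(q)$.

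For general $k$ I would simply substitute $q \mapsto q^{\ell^k}$ into the $k=0$ identity. Since sending $q$ to $q^{\ell^k}$ is a ring homomorphism $\mathbb{Z}\pq \to \mathbb{Z}\pq$ preserving integer coefficients, applying it to $f(q)^\ell = g(q^\ell) + \ell\, h(q)$ produces $f(q^{\ell^k})^\ell = g(q^{\ell^{k+1}}) + \ell \cdot h(q^{\ell^k})$, which is precisely the claimed decomposition, with the same $g = f$ and the correction term $h$ from the base case. The argument works verbatim with $\mathbb{Z}\bq$ in place of $\mathbb{Z}\pq$, since every construction here (the $\ell$-th power, the substitution, and division of an $\ell$-divisible series by $\ell$) preserves the subring of genuine power series.

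The only point requiring care — more a matter of bookkeeping than a genuine obstacle — is justifying the Frobenius congruence term-by-term for an \emph{infinite} Laurent series rather than for a polynomial. The orbit formulation above is what makes this clean: each coefficient of $f(q)^\ell$ is individually a finite sum, and boundedness below of the support of $f$ is exactly what rules out any convergence issue. One should likewise observe that $q \mapsto q^{\ell^k}$ is a legitimate operation on $\mathbb{Z}\pq$ because it only rescales exponents and so again preserves the lower bound on the support.
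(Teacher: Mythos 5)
Your proof is correct and follows essentially the same route as the paper's: both rest on the Frobenius congruence $f(q^{\ell^k})^\ell \equiv f(q^{\ell^{k+1}}) \pmod{\ell}$, which the paper asserts directly and you justify in more detail via the cyclic-orbit count and Fermat's little theorem. The only cosmetic difference is that you take $g=f$ and put everything else into $\ell h$, while the paper lets $g$ absorb all terms whose exponent is divisible by $\ell^{k+1}$; both choices are valid.
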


\begin{proof}
Fix an integer $k\geq 0$, and let $f(q) = \sum_{n\geq n_0}a(n)q^n$, with $a(n)\in\mathbb{Z}$.  Then $f(q^{\ell^k}) = \sum_{n\geq n_0} a(n) q^{n\ell^k}$, so that
\[
f(q^{\ell^k})^\ell  \equiv \sum_{\substack{n\geq n_0\\ \ell \nmid a(n)}}a(n) q^{n\ell^{k+1}} \pmod \ell.
\]
Thus the coefficients of any terms $q^m$ of $f(q^{\ell^k})^\ell$ where $m$ is not a multiple of $\ell^{k+1}$ must be divisible by $\ell$.  So we can write 
\[
f(q^{\ell^k})^\ell = g(q^{\ell^{k+1}})+\ell\cdot h(q^{\ell^k}),
\]
for series $g,h\in\mathbb{Z}\pq$.  
\end{proof}

\noindent The following lemma is used many times leading up to Theorem \ref{main}.

\begin{lemma}\label{l-divisibility}
Let $\ell$ be prime, and $f\in\mathbb{Z}\pq$ (resp. $\mathbb{Z}\bq$).  Then for any positive integer $m$, there exist $f_0,f_1,\ldots, f_m\in \mathbb{Z}\pq$ (resp. $\mathbb{Z}\bq$), such that
\[
f(q)^{\ell^m} = f_0(q^{\ell^m}) + \ell\cdot f_1(q^{\ell^{m-1}}) + \ell^2 \cdot f_2(q^{\ell^{m-2}}) + \cdots + \ell^{m-1}\cdot f_{m-1}(q^\ell)  + \ell^m\cdot f_m(q),
\]
\end{lemma}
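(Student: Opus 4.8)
The plan is to induct on $m$. The base case $m=1$ is precisely Proposition \ref{handy} applied with $k=0$, which gives $f(q)^\ell = f_0(q^\ell) + \ell\cdot f_1(q)$. So I would suppose the statement holds for some $m\ge 1$, write $f(q)^{\ell^m} = \sum_{i=0}^m \ell^i f_i(q^{\ell^{m-i}})$, and aim to produce the analogous expression for $f(q)^{\ell^{m+1}} = \bigl(f(q)^{\ell^m}\bigr)^\ell$ by raising this to the $\ell$-th power. The same argument applies verbatim in $\mathbb{Z}\pq$ and in $\mathbb{Z}\bq$, since products, $\ell$-th powers, and Proposition \ref{handy} all preserve each ring.

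I would expand the $\ell$-th power using the multinomial theorem. Writing $a_i := \ell^i f_i(q^{\ell^{m-i}})$, each term of $\bigl(\sum_i a_i\bigr)^\ell$ has the form $\binom{\ell}{c_0,\ldots,c_m}\prod_{i=0}^m a_i^{c_i}$ with $\sum_i c_i = \ell$. For each such term I would record two quantities: the power of $\ell$ dividing it, $a = v_\ell\!\bigl(\binom{\ell}{c_0,\ldots,c_m}\bigr) + \sum_i i\,c_i$, and the coarsest substitution level $b = m - \max\{i : c_i>0\}$, so that $\prod_i f_i(q^{\ell^{m-i}})^{c_i}$ is a series in $q^{\ell^b}$. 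The crucial elementary observation is that, because $\ell$ is prime, $\binom{\ell}{c_0,\ldots,c_m} = \ell!/(c_0!\cdots c_m!)$ is divisible by $\ell$ unless some $c_i = \ell$ and the rest vanish.

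The heart of the argument is then to check that every term satisfies $a+b \ge m+1$, with exactly one exception. If the multinomial coefficient is divisible by $\ell$ then $a \ge 1 + \max\{i:c_i>0\}$ while $b = m - \max\{i:c_i>0\}$, so $a+b\ge m+1$; and if $c_i=\ell$ for a single $i\ge 1$ then $a=i\ell$, $b=m-i$, giving $a+b = m + i(\ell-1)\ge m+1$. Any term with $a+b\ge m+1$ can be slotted into the target form: setting $j = m+1-b$ (so $0\le j\le m+1$), peel off $\ell^j$ and absorb the remaining integer factor $\ell^{a-j}$ into the coefficient series, which is legitimate since $a-j = a+b-(m+1)\ge 0$ and the series already lies in $q^{\ell^{m+1-j}}$. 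The single exceptional term is $c_0=\ell$, namely $f_0(q^{\ell^m})^\ell$, where $a=0$ and $b=m$ so $a+b=m$; this is exactly where Proposition \ref{handy} with $k=m$ does the real work, rewriting it as $g(q^{\ell^{m+1}}) + \ell\cdot h(q^{\ell^m})$, whose two pieces land in slots $j=0$ and $j=1$. Collecting all contributions by slot $j$ and summing the coefficient series (which keeps us in $\mathbb{Z}\pq$, resp. $\mathbb{Z}\bq$) yields the desired $f_0,\ldots,f_{m+1}$.

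I expect the main obstacle to be purely bookkeeping: verifying that every multinomial term except the pure $f_0^\ell$ contribution already carries enough factors of $\ell$ to sit at its coarser substitution level. Once the divisibility property of $\binom{\ell}{c_0,\ldots,c_m}$ is pinned down, this reduces to the two short valuation estimates above, and Proposition \ref{handy} absorbs the one remaining term, so no genuinely new difficulty arises in the inductive step.
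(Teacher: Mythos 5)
Your proposal is correct and follows essentially the same route as the paper: induction on $m$ with Proposition \ref{handy} as the base case, a multinomial expansion of $\bigl(f(q)^{\ell^m}\bigr)^\ell$, the divisibility of $\binom{\ell}{c_0,\ldots,c_m}$ by $\ell$ outside the pure-power terms, and a second application of Proposition \ref{handy} to the lone exceptional term $f_0(q^{\ell^m})^\ell$. The only difference is bookkeeping style --- you track a valuation $a$ and level $b$ per term and verify $a+b\ge m+1$, while the paper groups terms into sums $F_i$ indexed by the highest nonzero $c_i$ --- and these are equivalent.
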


\begin{proof}  Fix a prime $\ell$, and let $f\in\mathbb{Z}\pq$ with coefficients given by 
\[
f(q)=\sum_{n\geq n_0}a(n) q^n.
\]
We induct on $m$.  From Proposition \ref{handy} with $k=0$, we immediately obtain the case $m=1$.  Suppose now for an arbitrary positive integer $m$ that   
\[
f(q)^{\ell^m} = f_0(q^{\ell^m}) + \ell\cdot f_1(q^{\ell^{m-1}}) + \cdots + \ell^{m-1}\cdot f_{m-1}(q^\ell)  + \ell^m\cdot f_m(q),
\]  
for some $f_0,f_1,\ldots, f_m\in \mathbb{Z}\pq$.
 Then,
\[
f(q)^{\ell^{m+1}} =(f(q)^{\ell^m})^\ell = \left( f_0(q^{\ell^m}) + \ell f_1(q^{\ell^{m-1}}) +  \cdots + \ell^{m-1} f_{m-1}(q^\ell)  + \ell^m f_m(q) \right)^\ell.
\]
By the multinomial theorem, this yields that
\[
f(q)^{\ell^{m+1}} = \sum_{\substack{k_0+\cdots + k_m = \ell \\  k_i\geq 0}} \binom{\ell}{k_0, \dots ,k_m} \left[ f_0(q^{\ell^m}) \right]^{k_0} \left[ \ell f_1(q^{\ell^{m-1}}) \right]^{k_1} \cdots \left[ \ell^m f_m(q) \right]^{k_m}.
\]
We break this sum into cases based on the highest index $i$ for which $k_i\neq 0$.  In particular, letting
\[
F_i(q) := \sum_{\substack{k_0+\cdots+k_i =\ell \\ k_i\neq 0}}\binom{\ell}{k_0, \ldots, k_i} \left[ f_0(q^{\ell^m}) \right]^{k_0} \left[ \ell f_1(q^{\ell^{m-1}}) \right]^{k_1} \cdots \left[ \ell^i f_i(q^{\ell^{m-i}}) \right]^{k_i},
\]
for $0\leq i \leq m$, we obtain that
\[
f(q)^{\ell^{m+1}} = F_0(q) + F_1(q) + \cdots + F_m(q).
\]
We observe that $F_0(q)$ has only one term, $k_0=\ell$.  Thus $F_0(q) = f_0(q^{\ell^{m}})^\ell$.  Applying Proposition \ref{handy}, we see that 
\[
F_0(q) = g_0(q^{\ell^{m+1}})+\ell\cdot g_1(q^{\ell^m}),
\]
for some $g_0, g_1\in\mathbb{Z}\pq$.
For $1\leq i \leq m$, we separate the term $k_i=\ell$ to obtain
\[
F_i(q) =\left[ \ell^i f_i(q^{\ell^{m-i}})\right]^\ell + \sum_{\substack{k_0+\cdots+ k_i=\ell \\ 1\leq k_i \leq \ell-1}}\binom{\ell}{k_0, \ldots, k_i} \left[ f_0(q^{\ell^m}) \right] ^{k_0} \cdots \left[ \ell^i f_i(q^{\ell^{m-i}}) \right] ^{k_i}.
\]
We can see from above that $F_i(q)$ is a series in $q^{\ell^{m-i}}$.  In addition, each term is a multiple of $\ell^{i+1}$.  This is clear in the first term, since we have a factor of $\ell^{i\ell}$, and $\ell\geq 2$.  For the remaining terms we have a factor of $\ell^i$ appearing in $\left[ \ell^i f_i(q^{\ell^{m-i}}) \right]^{k_i}$ since $k_i\geq 1$, but in addition $\ell$ divides the multinomial coefficient $\binom{\ell}{k_0, \ldots, k_i}$, since $k_i \leq \ell-1$.  Thus we have for each $1\leq i \leq m$, that
\[
F_i(q) = \ell^{i+1}g_{i+1}(q^{\ell^{m-i}}), 
\]
where $g_{i+1}\in\mathbb{Z}\pq.$  Together, we have shown that
\[
f(q)^{\ell^{m+1}} =  g_0(q^{\ell^{m+1}})+\ell\cdot g_1(q^{\ell^m}) +  \ell^{2}g_{2}(q^{\ell^{m-1}}) + \cdots +  \ell^{m+1}g_{m+1}(q).
\]
\end{proof}

The following is a useful generalization of a lemma of Kiming and Olsson \cite{K-O} that gives an infinite family of congruences for each congruence proved using Theorem \ref{main}.  In particular, it yields Corollary \ref{nice2} as a consequence {of Corollary \ref{nice}}.  

\begin{lemma}\label{KOgeneralization}
Let $\ell\geq 5$ be prime.  For an integer $a$, and positive integers $m,k$, there is a congruence at $(\ell^r, k, a)$ for all $1\leq r\leq m$ if and only if there is a congruence at $(\ell^r, k+\ell^m, a)$ for all $1\leq r\leq m$.
\end{lemma}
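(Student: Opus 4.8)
The plan is to analyze how multiplication by the single factor $P(q)^{\ell^m}$ affects vanishing of coefficients along arithmetic progressions, where $P(q)^k := \prod_{n\geq 1}(1-q^n)^{-k} = \sum_{N\geq 0} p_k(N)\,q^N \in \mathbb{Z}\bq$. Since $P(q)^{k+\ell^m} = P(q)^k\cdot P(q)^{\ell^m}$ and $P(q)^{\ell^m}\in\mathbb{Z}\bq$ has constant term $1$, it is invertible in $\mathbb{Z}\bq$ with inverse $\prod_{n\ge 1}(1-q^n)^{\ell^m}$. The two directions of the biconditional are therefore entirely parallel: one passes from $k$ to $k+\ell^m$ by multiplying by $P(q)^{\ell^m}$, and back by multiplying by its inverse. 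So I would prove one implication in full and note that the other follows from the identical argument applied to the reciprocal series.

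Both $P(q)^{\ell^m}$ and $\prod(1-q^n)^{\ell^m}$ are $\ell^m$-th powers of elements of $\mathbb{Z}\bq$, so Lemma \ref{l-divisibility} applies to each and yields a ``staircase'' decomposition $P(q)^{\ell^m} = \sum_{i=0}^{m}\ell^i\,g_i(q^{\ell^{m-i}})$ with $g_i\in\mathbb{Z}\bq$, and similarly for the inverse. Writing $g_i(q^{\ell^{m-i}}) = \sum_{t\ge 0} b_{i,t}\,q^{t\ell^{m-i}}$ and extracting the coefficient of $q^{\ell^r n + a}$ from $P(q)^k\cdot P(q)^{\ell^m}$, I obtain
\[
p_{k+\ell^m}(\ell^r n + a) = \sum_{i=0}^{m}\ell^i\sum_{t\ge 0} b_{i,t}\,p_k\!\left(\ell^r n + a - t\ell^{m-i}\right).
\]

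The engine of the proof is a case analysis on the index $i$ against the fixed level $r$ (with $1\le r\le m$). For $i\ge r$ the explicit factor $\ell^i$ already makes the $i$-th summand vanish modulo $\ell^r$. For $i < r$, the key observation is that $m-i \ge r-i\ge 1$, so every exponent $t\ell^{m-i}$ is divisible by $\ell^{r-i}$; hence the argument $\ell^r n + a - t\ell^{m-i}$ is congruent to $a$ modulo $\ell^{r-i}$. The hypothesis of a congruence at $(\ell^{r-i},k,a)$ — which is among the assumptions since $1\le r-i\le m$ — then gives $p_k(\ell^r n + a - t\ell^{m-i})\equiv 0\pmod{\ell^{r-i}}$, and together with the prefactor $\ell^i$ the $i$-th summand is divisible by $\ell^{i}\cdot\ell^{r-i}=\ell^r$. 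Summing over $i$ yields the desired congruence at $(\ell^r,k+\ell^m,a)$, and the identical computation with $\prod(1-q^n)^{\ell^m}$ in place of $P(q)^{\ell^m}$ supplies the reverse implication.

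The step I expect to require genuine care — and the main obstacle — is precisely this matching of the $\ell^i$ deficit with a congruence one level lower, in particular the boundary bookkeeping when the shifted argument $s=\ell^r n + a - t\ell^{m-i}$ falls outside the range $s\ge a$ literally covered by the definition of a congruence. Here I would lean on the convention that $p_k(s)=0$ for $s<0$ together with the fact that $s$ is forced into the residue class $a \bmod \ell^{r-i}$: terms with $s<0$ vanish outright, and for $s\ge 0$ one checks that $s=\ell^{r-i}n'+a$ with $n'\ge 0$, so that the hypothesis applies directly. This is automatic whenever $a\le 0$, which is exactly the case $a=-\delta_{k,\ell,m}$ relevant to Corollary \ref{nice2}. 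Once this edge-case accounting is settled, the remaining manipulations are routine.
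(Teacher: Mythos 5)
Your proof is correct and rests on the same engine as the paper's --- factor the generating function as $P(q)^k\cdot P(q)^{\ell^m}$, decompose the $\ell^m$-th power via Lemma \ref{l-divisibility}, extract coefficients along the progression, and pay for each deficient factor $\ell^i$ with the hypothesized congruence at level $r-i$ --- but your organization is genuinely different and arguably cleaner. The paper inducts on $m$: its base case is outsourced to Lemma 1 of Kiming--Olsson, the congruences at levels $1\le r\le m$ are inherited from the induction hypothesis (writing $s\ell^{m+1}=(s\ell)\ell^m$), and only the top level $r=m+1$ is computed directly. You instead treat every level $1\le r\le m$ uniformly in a single pass via the case split $i\ge r$ (killed by the explicit factor $\ell^i$) versus $i<r$ (killed by $\ell^i$ times the congruence at $(\ell^{r-i},k,a)$, noting $1\le r-i\le m$); this removes both the induction and the external citation, and your reverse direction via the inverse series $\prod_{n\ge1}(1-q^n)^{\ell^m}$ is the same move the paper makes by allowing $s=-1$ in its generalized statement. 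One shared caveat: the boundary issue you flag --- arguments $s$ with $0\le s<a$ in the residue class of $a$ modulo $\ell^{r-i}$, which are not literally covered by the paper's definition of a congruence at $(\ell^{r-i},k,a)$ --- is present in the paper's proof as well (its inner sums run up to $j=n\ell^i+b_i$ with $b_i=\lfloor a/\ell^{m+1-i}\rfloor$, which exceeds $n\ell^i$ exactly when $a\ge\ell^{m+1-i}$), and the paper passes over it silently; your explicit observation that the problem evaporates for $a\le 0$, which covers every application made of the lemma (namely $a=-\delta_{k,\ell,m}$), is the more careful accounting.
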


\begin{proof}

We will show {more generally} that whenever there is a congruence at $(\ell^r,k,a)$ for all $1\leq r\leq m$, there is also a congruence at $(\ell^r,k+s\ell^m,a)$ for all $1\leq r\leq m$, for any {$s\in\mathbb{Z}$} for which $k+s\ell^m \geq 1$.  The lemma then follows by letting $s=1$ in the ``only if" direction, and $s=-1$ in the ``if" direction.  We prove this statement by induction on $m$.   

The case when $m=1$ {follows directly from} Lemma 1 of Kiming and Olsson in \cite{K-O}.  Suppose the claim holds {then} for a positive integer $m$ and assume there is a congruence at $(\ell^r,k,a)$ for all $1\leq r\leq m+1$.  {Fix an integer $s$} such that $k+s\ell^{m+1}\geq 1$.  Since $s\ell^{m+1}=(s\ell)\ell^m$, by the induction hypothesis there is a congruence at $(\ell^r, k+s\ell^{m+1}, a)$ for all $1\leq r\leq m$.  It remains to show there is a congruence at $(\ell^{m+1}, k+s\ell^{m+1}, a)$. 

Since {by assumption} we know there is a congruence at $(\ell^{m+1}, k, a)$, our goal is to relate $p_k(\ell^{m+1}n+a)$ and $p_{k+s\ell^{m+1}}(\ell^{m+1}n+a)$ modulo $\ell^{m+1}$.  Considering the generating functions of $p_k(n)$ and $p_{k+s\ell^{m+1}}(n)$, we see that
\[
\sum_{n=0}^\infty p_{k+s\ell^{m+1}}(n) q^n = \left( \sum_{n=0}^\infty p_k(n) q^n\right) 
\left( \left( \prod_{n=1}^\infty \frac{1}{1-q^n}\right)^{s} \right)^{\ell^{m+1}}.
\]
Thus, by Lemma \ref{l-divisibility},
\begin{multline*}
\sum_{n=0}^\infty p_{k+s\ell^{m+1}}(n) q^n = \\ \left( \sum_{n=0}^\infty p_k(n) q^n\right)\left( f_0(q^{\ell^{m+1}}) + \ell\cdot f_1(q^{\ell^{m}})  + \cdots + \ell^{m}\cdot f_{m}(q^\ell)  + \ell^{m+1}\cdot f_{m+1}(q) \right),
\end{multline*}
where $f_0,\ldots, f_{m+1}\in \mathbb{Z}\bq$. Note that we can apply Lemma \ref{l-divisibility} even if $s< 0$.

Define coefficients $c_i(n)\in\mathbb{Z}$ of $f_i(q^{\ell^{m+1-i}})$ by 
\[
f_i(q^{\ell^{m+1-i}}) = \sum_{n\geq 0}c_i(n)q^{n\ell^{m+1-i}}.
\]
Then, 
\begin{multline}\label{coeff}
\sum_{n=0}^\infty p_{k+s\ell^{m+1}}(n) q^n = \\ \left( \sum_{n=0}^\infty p_k(n) q^n\right) \left( \sum_{n\geq 0} c_0(n)q^{n\ell^{m+1}} + \ell \sum_{n\geq 0} c_1(n)q^{n\ell^{m}} + \cdots + \ell^{m+1} \sum_{n\geq 0} c_{m+1}(n)q^{n}  \right).
\end{multline}

For  {fixed integers $n\geq 0$, $a\geq 0$, and $0\leq i \leq m+1$}, the coefficient of $q^{n\ell^{m+1}+a}$ in the term 
\[
\ell^i\left(\sum_{j=0}^\infty p_k(j) q^j \right) \left( \sum_{j\geq 0} c_i(j) q^{j\ell^{m+1-i}} \right)
\]
is 
\[
\ell^i\sum_{j=0}^{n\ell^i+b_i} p_k\left( (\ell^i n - j)\ell^{m+1-i} + a \right)c_i(j),
\]
where  {$b_i=\lfloor \frac{a}{\ell^{m+1-i}}\rfloor$ and so} the $b_i$ satisfy $a = \ell^{m+1-i}b_i+a'$ for some $0\leq a'< \ell^{m+1-i}$.
 {By} equating the coefficients of $q^{n\ell^{m+1}+a}$ from the left and right sides of (\ref{coeff}), we have that for all $n\geq 0$,
\begin{equation}\label{sun}
p_{k+s\ell^{m+1}}(n\ell^{m+1}+a) = \sum_{i=0}^{m+1} \left( \ell^i\sum_{j=0}^{n\ell^i+b_i} p_k\left( (\ell^i n - j)\ell^{m+1-i} + a \right)c_i(j) \right).
\end{equation}
Since we are assuming that there is a congruence at $(\ell^r,k,a)$ for all $1\leq r\leq m+1$, for each $0\leq i\leq m$ we have that for any $0\leq j\leq {n\ell^i}+b_i$, 
\[
\ell^ip_k\left( (\ell^i n - j)\ell^{m+1-i} + a \right) \equiv 0 \pmod{\ell^{m+1}}.
\]
Combining this with 
(\ref{sun}) yields that
\[
p_{k+s\ell^{m+1}}(n\ell^{m+1}+a) \equiv 0 \pmod{\ell^{m+1}}
\]
for all $n\geq 0$, as desired.

\end{proof}

\subsection{Connection to modular forms on $\rm{SL}_2(\mathbb{Z})$}

Recall that a holomorphic modular form of integer weight $k$ on the modular group ${\rm SL}_2(\mathbb{Z})$ is a holomorphic function from the upper half plane $f:\mathcal{H}\rightarrow \mathbb{C}$, such that for all $z\in\mathcal{H}$, and $\left( \begin{smallmatrix} a & b \\ c & d \end{smallmatrix} \right) \in {\rm SL}_2(\mathbb{Z})$, 
\[
f\left(\frac{az + b}{cz + d}\right) = (cz+d)^kf(z).
\]
In addition, $f$ is holomorphic at $\infty$.  I.e., $f$ has a Fourier expansion of the form 
\[
f(z) = \displaystyle\sum_{n\geq 0}a(n)q^n, 
\]
where $q=e^{2\pi iz}$.  If $a(0)=0$, we say $f$ is a cusp form.  We denote by $M_k$ (resp. $S_k$) the finite dimensional complex vector space of holomorphic modular (cusp) forms of weight $k$.  For the reader unfamiliar with modular forms, we recommend \cite{Serre}, \cite{DiamondandShurman}, and \cite{Ono} for a nice introduction.

We now set $q=e^{2\pi iz}$ throughout.  The delta function $\Delta(z)$, defined by 
\[
\Delta(z) := q\prod_{n=1}^\infty (1-q^n)^{24},
\]
is a weight $12$ cusp form.  In addition, for even $k\geq 4$ the classical Eisenstein series $E_k$ defined by 
\[
E_k(z) := 1 + \frac{-2k}{B_k}\displaystyle\sum_{n\geq 1}\sigma_{k-1}(n)q^n,
\]
where $B_k$ is the $k^{th}$ Bernoulli number, and $\sigma_{k-1}(n) = \sum_{d|n}d^{k-1}$, lies in $M_k$.

For odd integers $k$, $M_k$ has dimension $0$.  When $k\geq 2$ is even, $M_k$ has dimension given by 
\begin{equation}\label{dim}
\dim(M_k) = \begin{cases}
    \lfloor \frac{k}{12} \rfloor & \text{ if } k\equiv 2 \pmod{12}, \\
   \lfloor \frac{k}{12} \rfloor +1  &  \text{ if } k\not\equiv 2 \pmod{12}.
\end{cases}
\end{equation}
A basis for $M_k$ can be given in terms of the Eisenstein series $E_4$ and $E_6$.  The following lemma is based on properties of Bernoulli numbers and can be found in \cite{Ono} (Lemma 1.22).

\begin{lemma}\label{E-equiv}
Let $k\geq 2$ be even, and $p$ prime.  If $k$ is divisible by $p-1$, then 
\[
E_k(z) \equiv 1 \pmod{p^{{\rm ord}_p (2k)+1}},
\]
where ${\rm ord}_p(2k)$ is the largest integer $n$ for which $p^n\mid 2k$.  
\end{lemma}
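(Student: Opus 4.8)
The plan is to read the congruence directly off the Fourier expansion $E_k(z) = 1 + \frac{-2k}{B_k}\sum_{n\geq 1}\sigma_{k-1}(n)q^n$ by showing that every nonconstant coefficient $\frac{-2k}{B_k}\sigma_{k-1}(n)$ is $p$-integral with $p$-adic valuation at least ${\rm ord}_p(2k)+1$. Writing $N := {\rm ord}_p(2k)$, it suffices to prove that ${\rm ord}_p\!\left(\frac{2k}{B_k}\right) = N+1$, since each $\sigma_{k-1}(n) = \sum_{d\mid n} d^{k-1}$ is an integer and hence contributes nonnegative $p$-adic valuation.

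The key input is the von Staudt--Clausen theorem, which asserts that for even $k\geq 2$ the number $B_k + \sum_{(p-1)\mid k}\frac{1}{p}$ is an integer. This determines the denominator of $B_k$ exactly: a prime $p$ divides the denominator of $B_k$ if and only if $(p-1)\mid k$, and when it does it divides to precisely the first power. Under the hypothesis $(p-1)\mid k$ of the lemma, this yields ${\rm ord}_p(B_k) = -1$.

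With this in hand the computation is immediate:
\[
{\rm ord}_p\!\left(\frac{2k}{B_k}\right) = {\rm ord}_p(2k) - {\rm ord}_p(B_k) = N - (-1) = N+1.
\]
Combining this with ${\rm ord}_p\!\left(\sigma_{k-1}(n)\right)\geq 0$ gives ${\rm ord}_p\!\left(\frac{-2k}{B_k}\sigma_{k-1}(n)\right)\geq N+1$ for all $n\geq 1$. In particular these coefficients are $p$-integral, so the asserted congruence $E_k(z)\equiv 1 \pmod{p^{N+1}}$ is meaningful and holds.

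The only real obstacle is invoking von Staudt--Clausen with the correct normalization, so as to extract the exact valuation ${\rm ord}_p(B_k) = -1$ rather than a mere inequality; once that is established the remainder is a single valuation count. I anticipate no difficulty from the $\sigma_{k-1}(n)$ factor, as it is a sum of integers and need only be $p$-integral.
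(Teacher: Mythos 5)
Your proof is correct. The paper does not actually prove this lemma---it cites it as Lemma 1.22 of \cite{Ono}---and your argument via the von Staudt--Clausen theorem (which pins down ${\rm ord}_p(B_k)=-1$ exactly when $(p-1)\mid k$, whence ${\rm ord}_p(2k/B_k)={\rm ord}_p(2k)+1$, with $\sigma_{k-1}(n)\in\mathbb{Z}$ contributing nonnegative valuation) is precisely the standard proof of that cited result.
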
 

The proof of Theorem \ref{main} relies on an elegant result of Choie, Kohnen, and Ono found in \cite{CKO}.  To state this result we need some additional terminology.  For a function $f(z)$ with Fourier expansion $f(z)=\sum_{n\geq n_0} a_n q^n$, define 
\[
const(f) = a_0.
\]
In addition, for a positive even integer $k$, define
\begin{equation}\label{E}
\tilde{E}_k = \begin{cases}
  1    & \text{ if } k\equiv 0 \pmod{12} , \\
   E_{14}    & \text{ if } k\equiv  2\pmod{12} ,\\
   E_4     &\text{ if } k\equiv  4\pmod{12} ,  \\
     E_6    & \text{ if } k\equiv  6\pmod{12} , \\
       E_4^2   &  \text{ if } k\equiv  8\pmod{12} ,\\
      E_4E_6     & \text{ if } k\equiv  10\pmod{12}  . \\
\end{cases}
\end{equation}
   
\begin{theorem}\label{CKO}(Choie, Kohnen, Ono)
Let $f\in M_{12n+14}$, $g\in M_k$, and $D(k)=\dim(M_k)$.  Then,
\[
const\left( \frac{f\cdot g}{\Delta^{n+D(k)}\cdot \tilde{E_k}} \right) = 0.
\]

\end{theorem}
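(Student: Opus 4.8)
The plan is to recognize the quotient
\[
F(z) := \frac{f(z)\,g(z)}{\Delta(z)^{\,n+D(k)}\,\tilde{E}_k(z)}
\]
as a meromorphic modular form of weight $2$ on $\mathrm{SL}_2(\mathbb{Z})$ that is holomorphic on all of the upper half plane $\mathcal{H}$, and then to apply the standard fact that any weight $2$ meromorphic modular form holomorphic on $\mathcal{H}$ has vanishing constant term. All of the content lies in verifying these two structural properties of $F$.

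First I would confirm that $F$ has weight exactly $2$. By \eqref{E}, $\tilde{E}_k$ is a modular form whose weight $w_k$ is the unique element of $\{0,4,6,8,10,14\}$ congruent to $k$ modulo $12$. Since $f\cdot g \in M_{12n+14+k}$ while $\Delta^{\,n+D(k)}\tilde{E}_k \in M_{12(n+D(k))+w_k}$, the quotient $F$ transforms with weight $(12n+14+k)-\bigl(12(n+D(k))+w_k\bigr) = 14 + (k - w_k) - 12 D(k)$. Running through the six residue classes of $k\bmod 12$, using the value of $w_k$ from \eqref{E} and the dimension $D(k)=\dim M_k$ from \eqref{dim}, one checks in each case that this equals $2$; for instance, when $k\equiv 8\pmod{12}$ one has $w_k=8$ and $D(k)=\lfloor k/12\rfloor+1$, giving $14+(k-8)-12(\lfloor k/12\rfloor+1)=2$.

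The heart of the proof is showing that $F$ is holomorphic on $\mathcal{H}$. Since $\Delta$ is nonvanishing on $\mathcal{H}$, the only possible poles of $F$ come from the zeros of $\tilde{E}_k$, namely the elliptic points $\rho=e^{2\pi i/3}$ (from factors of $E_4$) and $i$ (from factors of $E_6$). I would rule these out using the structure theorem $\bigoplus_k M_k=\mathbb{C}[E_4,E_6]$. Writing $\tilde{E}_k = E_4^{a}E_6^{b}$, one reads off from \eqref{E} that $a\in\{0,1,2\}$ is the least nonnegative residue of $k\bmod 3$ and $b\in\{0,1\}$ that of $k/2\bmod 2$. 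Now any $g\in M_k$ is a $\mathbb{C}$-linear combination of monomials $E_4^{\alpha}E_6^{\beta}$ with $4\alpha+6\beta=k$; reducing this relation modulo $3$ and modulo $2$ shows $\alpha\equiv k\pmod 3$ and $\beta\equiv k/2\pmod 2$, so $\alpha\geq a$ and $\beta\geq b$. Hence $\tilde{E}_k=E_4^{a}E_6^{b}$ divides every such monomial, and therefore divides $g$ in $\mathbb{C}[E_4,E_6]$. Thus $g/\tilde{E}_k$ is again a holomorphic modular form, $f\cdot(g/\tilde{E}_k)$ is holomorphic on $\mathcal{H}$, and $F = f\cdot(g/\tilde{E}_k)/\Delta^{\,n+D(k)}$ is holomorphic on $\mathcal{H}$ as claimed.

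It remains to deduce $const(F)=0$. Because $F$ is meromorphic at the cusp (the factor $\Delta^{-(n+D(k))}$ makes its $q$-expansion a Laurent series with finite principal part), and because its weight $2$ invariance is exactly what makes $F(z)\,dz$ well defined, the differential $\omega=F(z)\,dz$ descends to a meromorphic differential on the compact Riemann surface $X(1)=\mathcal{H}^{*}/\mathrm{SL}_2(\mathbb{Z})\cong\mathbb{P}^1$. Holomorphy of $F$ on $\mathcal{H}$ ensures that the only pole of $\omega$ is at the cusp; writing $q=e^{2\pi i z}$ so that $dz=dq/(2\pi i q)$, the residue of $\omega$ there is $const(F)/(2\pi i)$. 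Since the sum of the residues of a meromorphic differential on a compact Riemann surface is $0$, we conclude $const(F)=0$. I expect the main obstacle to be the holomorphy step: one must see that the apparent poles of $F$ at the elliptic points $i$ and $\rho$ are illusory because every weight $k$ form is forced to vanish there to at least the order of $\tilde{E}_k$, which is precisely what the choice of $\tilde{E}_k$ in \eqref{E} is engineered to guarantee.
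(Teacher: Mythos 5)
The paper does not prove this statement: it is quoted verbatim from Choie--Kohnen--Ono \cite{CKO} and used as a black box, so there is no internal proof to compare against. Your argument is correct and is essentially the original one --- the weight count, the divisibility of any $g\in M_k$ by $\tilde{E}_k$ in $\mathbb{C}[E_4,E_6]$ (which is exactly what \eqref{E} is designed to ensure), and the vanishing of the constant term of a weight-$2$ meromorphic modular form that is holomorphic on $\mathcal{H}$ all check out. The only step deserving an extra line is the descent of $F(z)\,dz$ to a differential on $X(1)$ that is actually holomorphic at the elliptic points $i$ and $\rho$ (the weight-$2$ invariance forces the local expansion in the uniformizer at those points to have the right parity, so no fractional-order poles appear); alternatively you can sidestep this entirely by integrating $F(z)\,dz$ over the boundary of the truncated fundamental domain, where the side identifications cancel and the top edge yields $const(F)=0$ directly.
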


\begin{definition}
Let $k,m$ be positive integers.  For all integers $n\geq 0$ we define $\tau_{k,m}(n)$ to be the $n$th Fourier coefficient of $\Delta(z)^{\delta_{k,\ell,m}}$.  I.e., 
\begin{equation}\label{tau}
\sum_{n=0}^\infty \tau_{k,m}(n) q^n = q^{\delta_{k,\ell,m}}\prod_{n=1}^\infty (1-q^n)^{k(\ell^{2m}-1)} = \Delta(z)^{\delta_{k,\ell,m}}.
\end{equation}
Note that $\tau_{k,m}(n)\in\mathbb{Z}$.
\end{definition}

The reason we are able to use Theorem \ref{CKO} to prove Theorem \ref{main} is due to an explicit relationship between $p_k(n)$ and $\Delta(z)$.  Notice that if we fix positive integers $k,m$ and a prime $\ell\geq 5$, we have from (\ref{tau}) that  
\[
\Delta(z)^{\delta_{k,\ell,m}} = \sum_{n=0}^\infty \tau_{k,m}(n) q^n = q^{\delta_{k,\ell,m}}\left(  \sum_{n=0}^\infty p_k(n)q^n \right) \left(\prod_{n=1}^\infty (1-q^n)\right) ^{k\ell^{2m}}.
\]
 For any $1\leq r \leq 2m$,
\begin{equation}\label{taueqn}
\sum_{n=0}^\infty \tau_{k,m}(n) q^n = \left(  \sum_{n=0}^\infty p_k(n-\delta_{k,\ell,m})q^n \right) \left( \left(\prod_{n=1}^\infty (1-q^n)\right) ^{k\ell^{2m-r}}\right) ^{\ell^r}.
\end{equation}
Applying Lemma \ref{l-divisibility} to $\prod_{n=1}^\infty (1-q^n)^{k\ell^{2m-r}}$, gives that 
\[
\sum_{n=0}^\infty \tau_{k,m}(n) q^n = \left(  \sum_{n=0}^\infty p_k(n-\delta_{k,\ell,m})q^n \right) \left( f_0(q^{\ell^{r}}) + \ell f_1(q^{\ell^{r-1}})  + \cdots + \ell^{r} f_{r}(q)  \right),
\]
for some $f_0,\ldots, f_{r}\in \mathbb{Z}\bq$.  For $0\leq i\leq r$, define coefficients $c_{r,i}(n)\in\mathbb{Z}$ by 
\[
f_i(q^{\ell^{r-i}}) = \sum_{n\geq 0}c_{r,i}(n)q^{n\ell^{r-i}}.
\]
Thus we have
\begin{multline*}
\sum_{n=0}^\infty \tau_{k,m}(n) q^n =  \left(  \sum_{n=0}^\infty p_k(n-\delta_{k,\ell,m})q^n \right) \cdot \\ \left( \sum_{n\geq 0} c_{r,0}(n)q^{n\ell^{r}} + \ell \sum_{n\geq 0} c_{r,1}(n)q^{n\ell^{r-1}} + \cdots + \ell^{r} \sum_{n\geq 0} c_{r,r}(n)q^{n}  \right).
\end{multline*}
The coefficient of $q^{n\ell^r}$ in the term 
\[
\left(  \sum_{n=0}^\infty p_k(n-\delta_{k,\ell,m})q^n \right)  \left( \ell^i\sum_{n\geq 0} c_{r,i}(n)q^{n\ell^{r-i}} \right)
\]
is 
\[
\ell^i\sum_{j=0}^{n\ell^i}p_k\left((n\ell^i-j)\ell^{r-i} - \delta_{k,\ell,m}\right) c_{r,i}(j).
\]
Thus we have for any $1\leq r\leq2 m$,
\begin{align}\label{moon}
\tau_{k,m}(n\ell^r) &= \sum_{i=0}^r \ell^i\sum_{j=0}^{n\ell^i}p_k\left((n\ell^i-j)\ell^{r-i} - \delta_{k,\ell,m}\right) c_{r,i}(j) \nonumber \\ 
& \equiv \sum_{i=0}^{r-1} \ell^i\sum_{j=0}^{n\ell^i}p_k\left((n\ell^i-j)\ell^{r-i} - \delta_{k,\ell,m}\right) c_{r,i}(j) \pmod{\ell^r}.
\end{align}

If we rewrite (\ref{taueqn}) as
\[
\sum_{n=0}^\infty p_k(n-\delta_{k,\ell,m})q^n  = 
\left( \sum_{n=0}^\infty \tau_{k,m}(n) q^n \right)  \left( \left(\prod_{n=1}^\infty \frac{1}{(1-q^n)} \right) ^{k\ell^{2m-r}}\right) ^{\ell^r}
\]
and let 
\begin{multline*}
 \left( \left(\prod_{n=1}^\infty \frac{1}{(1-q^n)} \right) ^{k\ell^{2m-r}}\right) ^{\ell^r} = \\
 \sum_{n\geq 0} b_{r,0}(n)q^{n\ell^{r}} + \ell \sum_{n\geq 0} b_{r,1}(n)q^{n\ell^{r-1}} + \cdots + \ell^{r} \sum_{n\geq 0} b_{r,r}(n)q^{n} ,
\end{multline*}
again using Lemma  \ref{l-divisibility}, then similarly we get that 
\begin{align}\label{moon2}
p_k(n\ell^r-\delta_{k,\ell, m}) &=\sum_{i=0}^r \ell^i\sum_{j=0}^{n\ell^i} \tau_{k,m}((n\ell^i-j)\ell^{r-i}) b_{r,i}(j) \nonumber \\
 & \equiv  \sum_{i=0}^{r-1} \ell^i\sum_{j=0}^{n\ell^i} \tau_{k,m}((n\ell^i-j)\ell^{r-i}) b_{r,i}(j)\pmod{\ell^r}.
\end{align}

We will use these facts in our proof of the following lemma, which generalize a proposition of Lachterman, Schayer, and Younger in \cite{LSY}. 

\begin{lemma}\label{m-1}
Let $\ell\geq 5$ be prime, $k,m$ and $B$ be positive integers, and $1\leq r \leq m$.  There is a congruence at $(\ell^s, k, -\delta_{k,\ell,m})$  for each $1\leq s \leq r$  if and only if
\[
\tau_{k,m}(\ell^sn)\equiv 0 \pmod{\ell^s}
\]
for all $n\geq 0$, for each $1\leq s\leq r$.  

Moreover if there is a congruence at $(\ell^s, k, -\delta_{k,\ell,m})$  for each $1\leq s \leq r$, and $p_k(\ell^{r+1}n-\delta_{k,\ell,m}) \equiv 0 \pmod{\ell^{r+1}}$ for all $n < B$, then $\tau_{k,m}(\ell^{r+1}n)\equiv  0 \pmod{\ell^{r+1}}$ for all $n<B$.

\end{lemma}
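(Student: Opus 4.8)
The plan is to derive both statements directly from the two coefficient identities (\ref{moon}) and (\ref{moon2}) established above, using the single arithmetic observation that a factor $\ell^i$ times a quantity divisible by $\ell^{j}$ is divisible by $\ell^{i+j}$. Throughout I would record that in both (\ref{moon}) and (\ref{moon2}) the inner index $j$ runs only up to $n\ell^i$, so every argument appearing, namely $n\ell^i-j$, is a nonnegative integer; this is exactly what lets me apply a congruence at $(\ell^{t},k,-\delta_{k,\ell,m})$ or a vanishing statement for $\tau_{k,m}$ to each summand individually, rather than needing to understand the sum globally.

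For the forward direction of the equivalence, I would fix $s$ with $1\le s\le r$ and read off (\ref{moon}) with $s$ in the role of $r$ (legitimate since $s\le m\le 2m$). Each summand for $0\le i\le s-1$ contains the factor $\ell^i\, p_k\big((n\ell^i-j)\ell^{s-i}-\delta_{k,\ell,m}\big)$. Since $1\le s-i\le s\le r$, the hypothesis supplies a congruence at $(\ell^{s-i},k,-\delta_{k,\ell,m})$, and because $n\ell^i-j\ge 0$ this gives $p_k\big((n\ell^i-j)\ell^{s-i}-\delta_{k,\ell,m}\big)\equiv 0 \pmod{\ell^{s-i}}$; multiplying by $\ell^i$ makes the term vanish mod $\ell^s$, so $\tau_{k,m}(\ell^s n)\equiv 0 \pmod{\ell^s}$. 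The reverse direction is completely symmetric: I would instead invoke (\ref{moon2}), where each summand carries the factor $\ell^i\,\tau_{k,m}\big((n\ell^i-j)\ell^{s-i}\big)$, and the hypothesis $\tau_{k,m}(\ell^{s-i}N)\equiv 0\pmod{\ell^{s-i}}$ for all $N\ge0$ kills it mod $\ell^s$. Thus no genuine induction is needed for the equivalence; it is term-by-term in both directions.

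For the ``moreover'' clause I would apply (\ref{moon}) once more, now with $r+1$ in place of $r$ (valid since $r+1\le m+1\le 2m$), and split the outer sum at $i=0$. For $1\le i\le r$ the relevant exponent is $r+1-i$ with $1\le r+1-i\le r$, so exactly as above the assumed congruences at $(\ell^{r+1-i},k,-\delta_{k,\ell,m})$ make each of these summands vanish mod $\ell^{r+1}$. The one term not covered by the standing congruences is $i=0$, namely $\sum_{j=0}^{n} p_k\big((n-j)\ell^{r+1}-\delta_{k,\ell,m}\big)c_{r+1,0}(j)$. Here I would use the finite hypothesis: for $n<B$ every argument $n-j$ with $0\le j\le n$ satisfies $0\le n-j\le n<B$, so $p_k\big((n-j)\ell^{r+1}-\delta_{k,\ell,m}\big)\equiv 0\pmod{\ell^{r+1}}$ by assumption, and this term vanishes as well. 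Hence $\tau_{k,m}(\ell^{r+1}n)\equiv 0\pmod{\ell^{r+1}}$ for all $n<B$.

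The step I expect to require the most care is this $i=0$ term in the second part: it is the only place where the finite range $n<B$ enters, and the whole argument hinges on checking that the shifted indices $n-j$ stay in $[0,B)$, so that the local (finite) divisibility of $p_k$ propagates to the corresponding finite divisibility of $\tau_{k,m}$. Everything else reduces to routine bookkeeping with the powers of $\ell$ and to confirming that the summation ranges keep all arguments of $p_k$ and $\tau_{k,m}$ nonnegative.
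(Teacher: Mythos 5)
Your proposal is correct and follows essentially the same route as the paper: both directions of the equivalence are read off term-by-term from the identities (\ref{moon}) and (\ref{moon2}), and the ``moreover'' clause is obtained by applying (\ref{moon}) with $r+1$ in place of $r$ and restricting to $n<B$. Your treatment of the $i=0$ term in the second part merely spells out the detail the paper leaves implicit.
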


\begin{proof}

Suppose there is a congruence at $(\ell^s, k, -\delta_{k,\ell,m})$ for each $1\leq s\leq r$, and fix $1\leq s\leq r$, $n\geq 0$.  Then for each $0\leq i \leq s-1$, and $0\leq j\leq n\ell^i$ we have that
\[
p_k\left((n\ell^i-j)\ell^{s-i} - \delta_{k,\ell,m}\right) \equiv 0 \pmod{\ell^{s-i}}.
\]
Since $1\leq s\leq m$, by (\ref{moon}) it follows that 
\[
\tau_{k,m}(n\ell^s) \equiv 0 \pmod{\ell^s}
\]
for all $n\geq 0$.

The final sentence of the theorem follows from (\ref{moon}) as well, by considering $r+1$, and restricting values of $n$ to $n<B$.


The reverse implication follows from a similar argument utilizing (\ref{moon2}).
\end{proof}

\section{The proof of Theorem \ref{main}}\label{proof}

For each $n\in\mathbb{Z}$ we define the integers 
\[
w_{k,\ell,m}(n) = 12(n\ell^m-\delta_{k,\ell,m}) +2,
\]
when $k,m$ are positive integers, and $\ell\geq 5$ is prime.

\begin{proposition}\label{one}
Fix a prime $\ell\geq 5$, and positive integers $k,m$ such that $k\equiv -4 \pmod{\ell^{m-1}}$.  Then for any integer $n$, we have 
\[
w_{k,\ell,m}(n)  \equiv 0 \pmod{\ell^{m-1}}.
\]
\end{proposition}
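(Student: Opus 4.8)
The plan is to reduce the statement to a single congruence modulo $\ell^{m-1}$ by substituting the definition of $\delta_{k,\ell,m}$ into $w_{k,\ell,m}(n)$ and then clearing denominators. Writing everything out,
\[
w_{k,\ell,m}(n) = 12(n\ell^m - \delta_{k,\ell,m}) + 2 = 12 n\ell^m - \frac{k(\ell^{2m}-1)}{2} + 2,
\]
where I use that $12\,\delta_{k,\ell,m} = k(\ell^{2m}-1)/2$. The presence of the factor $1/2$ is the only delicate point, so my first move is to clear it: since $\ell \geq 5$ is odd, $2$ is invertible modulo $\ell^{m-1}$, and hence $w_{k,\ell,m}(n) \equiv 0 \pmod{\ell^{m-1}}$ if and only if $2\,w_{k,\ell,m}(n) \equiv 0 \pmod{\ell^{m-1}}$.

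Next I would compute
\[
2\,w_{k,\ell,m}(n) = 24\, n \ell^m - k(\ell^{2m}-1) + 4 = 24\, n \ell^m - k\ell^{2m} + (k+4),
\]
and reduce modulo $\ell^{m-1}$ term by term. Since $m \geq m-1$ the summand $24\, n \ell^m$ is divisible by $\ell^{m-1}$, and since $2m \geq m-1$ the summand $k\ell^{2m}$ is likewise divisible by $\ell^{m-1}$; both therefore vanish. This leaves
\[
2\,w_{k,\ell,m}(n) \equiv k + 4 \pmod{\ell^{m-1}}.
\]

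The final step invokes the hypothesis: $k \equiv -4 \pmod{\ell^{m-1}}$ gives $k + 4 \equiv 0 \pmod{\ell^{m-1}}$, so $2\,w_{k,\ell,m}(n) \equiv 0$, and by the invertibility of $2$ noted above, $w_{k,\ell,m}(n) \equiv 0 \pmod{\ell^{m-1}}$ for every integer $n$. I expect no genuine obstacle here — the argument is a one-line congruence computation once the $1/2$ has been cleared, and the role of the hypothesis $k \equiv -4 \pmod{\ell^{m-1}}$ is precisely to annihilate the residual constant term $k+4$. In the boundary case $m=1$ the modulus is $\ell^0 = 1$ and the assertion is vacuously true.
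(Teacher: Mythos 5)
Your proof is correct and follows essentially the same route as the paper: both arguments reduce to observing that $k(\ell^{2m}-1)\equiv 4\pmod{\ell^{m-1}}$ under the hypothesis $k\equiv -4\pmod{\ell^{m-1}}$ and then use the invertibility of $2$ modulo the odd prime power to handle the factor $\tfrac12$ in $12\delta_{k,\ell,m}$. The only cosmetic difference is that you clear the denominator by working with $2\,w_{k,\ell,m}(n)$ while the paper divides the congruence $k(\ell^{2m}-1)\equiv 4$ by $2$ directly; the content is identical.
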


\begin{proof}
Since $k\equiv -4 \pmod{\ell^{m-1}}$, we have $k(\ell^{2m}-1)\equiv 4 \pmod{\ell^{m-1}}$, and since $2$ is relatively prime to $\ell$,
\[
12\delta_{k,\ell,m} = \frac{k(\ell^{2m}-1)}{2}\equiv 2 \pmod{\ell^{m-1}}.
\]
Thus for any integer $n$,
\[
w_{k,\ell,m}(n) \equiv -12\delta_{k,\ell,m} + 2 \equiv 0 \pmod{\ell^{m-1}}.
\]
\end{proof}

\noindent In light of Proposition \ref{one}, for each $n\in \mathbb{Z}$, there is an integer $C_n$ such that $w_{k,\ell,m}(n) = C_n\ell^{m-1}$.  Moreover, $C_n$ must be even since $w_{k,\ell,m}(n)$ is even, and $\ell$ is odd.  Thus we define 
\[
K_n\in\{0,4,6,\ldots, \ell-3, \ell+1\}
\]
when $\ell\geq7$, and $K_n\in\{0,4,6\}$ when $\ell=5$, so that 
\[
K_n\equiv C_n \pmod{\ell-1}.
\]

\begin{proposition}\label{two}
Fix a prime $\ell\geq 5$, and positive integers $k,m$ such that $k\equiv -4 \pmod{\ell^{m-1}}$.  Then if $n \ge \frac{k \ell^m +2\ell+2}{24}$,
 we have that 
\[
s_n = \frac{w_{k,\ell,m}(n)-K_n\ell^m}{\ell^{m-1}(\ell-1)}
\]
is a positive integer.
\end{proposition}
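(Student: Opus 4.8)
The plan is to prove two separate facts about $s_n$: that it is an integer, and that it is positive. For the integrality, the crucial input is Proposition \ref{one}, which guarantees $\ell^{m-1} \mid w_{k,\ell,m}(n)$, so that we may write $w_{k,\ell,m}(n) = C_n\ell^{m-1}$ exactly as in the setup preceding the statement. Substituting this into the definition of $s_n$ lets me cancel a factor of $\ell^{m-1}$ between numerator and denominator, reducing the whole integrality question to showing that
\[
s_n = \frac{C_n - K_n\ell}{\ell - 1}
\]
is an integer, which is now a clean divisibility statement modulo $\ell-1$.

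First I would dispatch the integrality. Since $\ell \equiv 1 \pmod{\ell-1}$, I have $K_n\ell \equiv K_n \pmod{\ell-1}$, and by the very choice of $K_n$ we have $K_n \equiv C_n \pmod{\ell-1}$. Combining these gives $C_n - K_n\ell \equiv C_n - K_n \equiv 0 \pmod{\ell-1}$, so $\ell-1$ divides the numerator and $s_n \in \mathbb{Z}$.

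For positivity, note that the denominator $\ell^{m-1}(\ell-1)$ is positive, so it suffices to show the numerator $w_{k,\ell,m}(n) - K_n\ell^m$ is positive. The plan is to bound $K_n$ uniformly from above — by construction $K_n \leq \ell+1$ in both the $\ell=5$ and $\ell \geq 7$ cases — reducing the task to proving $w_{k,\ell,m}(n) > (\ell+1)\ell^m$. Expanding $w_{k,\ell,m}(n) = 12n\ell^m - \frac{k(\ell^{2m}-1)}{2} + 2$ and feeding in the hypothesis $n \geq \frac{k\ell^m + 2\ell + 2}{24}$, which rearranges to $12n\ell^m \geq \frac{k\ell^{2m}}{2} + (\ell+1)\ell^m$, makes the $\frac{k\ell^{2m}}{2}$ terms cancel and yields
\[
w_{k,\ell,m}(n) \geq (\ell+1)\ell^m + \frac{k}{2} + 2 > (\ell+1)\ell^m.
\]
Hence the numerator is positive, and since $s_n$ is an integer it follows that $s_n \geq 1$, as required.

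I expect the argument to be essentially routine once Proposition \ref{one} is available; the only points demanding care are tracking the factor $\ell^{m-1}$ correctly when reducing the integrality claim modulo $\ell-1$, and confirming that the uniform bound $K_n \leq \ell+1$ genuinely holds in the $\ell=5$ case, where the admissible set $\{0,4,6\}$ still has maximum $\ell+1=6$, so that the same positivity estimate applies without modification.
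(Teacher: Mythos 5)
Your proof is correct and follows essentially the same route as the paper: integrality via Proposition \ref{one} together with the congruence $K_n\equiv C_n\pmod{\ell-1}$ (you cancel $\ell^{m-1}$ first rather than invoking coprimality of $\ell^{m-1}$ and $\ell-1$, but this is the same fact), and positivity via the identical expansion of $12n\ell^m$ against $12\delta_{k,\ell,m}$ and the uniform bound $K_n\leq\ell+1$.
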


\begin{proof}
From Proposition \ref{one} we see that $w_{k,\ell,m}(n)-K_n\ell^m \equiv 0 \pmod{\ell^{m-1}}$.  To conclude that $s_n\in\mathbb{Z}$, it suffices to show that $w_{k,\ell,m}(n)-K_n\ell^m \equiv 0 \pmod{\ell-1}$, since $\ell^{m-1}$ and $\ell-1$ are relatively prime.  This fact follows from the definition of $K_n$.  Also, $s_n$ is positive because 
$$
12 n\ell^m \ge \frac{\ell^m(k \ell^m +2\ell+2)}{2}
> \frac{k (\ell^{2m}-1)} {2} + \ell^m(\ell +1) -2
= 12 \delta_{k,\ell, m}  + \ell^m(\ell +1) -2
$$
where the first inequality follows from the assumption that 
 $n \ge \frac{k \ell^m +2\ell+2}{24}$. Thus,
\[
w_{k,\ell,m}(n) > \ell^m(\ell+1)\geq K_n\ell^m,
\]
and so $s_n>0$.
\end{proof}

We are now able to prove Theorem \ref{main}.  Suppose that there is a congruence at $(\ell^r,k,-\delta_{k,\ell,m})$ for all $1\leq r \leq m-1$.  Clearly if there is a congruence at $(\ell^m,k,-\delta_{k,\ell,m})$, then we have that 
\[
p_k(\ell^mn-\delta_{k,\ell,m})\equiv 0 \pmod{\ell^m}
\] 
for all $0 \le  n <  \frac{k \ell^m +2\ell+2}{24} $.  We now show that if 
\[
p_k(\ell^mn-\delta_{k,\ell,m})\equiv 0 \pmod{\ell^m}
\]
for all $0\le n < \frac{k \ell^m +2\ell+2}{24}$, then there is a congruence at $(\ell^m,k,-\delta_{k,\ell,m})$.

Define \[
g(z) = \Delta(z)^{\delta_{k,\ell,m}},
\]
and for any integer $n\geq \frac{k \ell^m +2\ell+2}{24}$, define
\[
f_n(z)= E_{K_n}(z)^{\ell^m}\cdot E_{\ell^{m-1}(\ell-1)}(z)^{s_n},
\]
where we make the convention that $E_{K_n}(z)=1$ if $K_n=0$.  Recall by Proposition \ref{two}, that $s_n$ is a positive integer. 

The weight of $f_n(z)$ is 
\[
K_n\ell^m+s_n\ell^{m-1}(\ell-1) = w_{k,\ell,m}(n).
\]
Thus $f_n(z)\in M_{12(n\ell^m-\delta_{k,\ell,m}-1) +14}$, and $g(z)\in M_{12\delta_{k,\ell,m}}$.  We have by (\ref{dim}) that $D(12\delta_{k,\ell,m})= \delta_{k,\ell,m} + 1$, and by (\ref{E}) that $\tilde{E}_{12\delta_{k,\ell,m}}= 1$.  Thus Theorem \ref{CKO} gives that
\[
const\left( \frac{f_n\cdot g}{\Delta^{n\ell^m}} \right) = 0.
\]

\noindent By Lemma \ref{E-equiv}, $E_{\ell^{m-1}(\ell-1)}(z)\equiv 1 \pmod{\ell^{m}}$.  Thus $f_n(z) \equiv E_{K_n}(z)^{\ell^m} \pmod{\ell^m}$, and so 
\begin{equation}\label{zero}
const\left( \frac{E_{K_n}(z)^{\ell^m}\cdot \Delta(z)^{\delta_{k,\ell,m}}}{\Delta(z)^{n\ell^m}} \right) \equiv 0 \pmod{\ell^m}.
\end{equation}

Consider the Fourier series in $\mathbb{Z}\pq$ of 
\[
\left( \frac{E_{K_n}(z)}{\Delta(z)^n} \right)^{\ell^m}.
\]
Since the Fourier series of $E_{K_n}(z)$ starts with  $1 + \cdots$, and $\Delta(z)^n = q^{n} + \cdots$, it follows that
\begin{equation}\label{expansion}
\left(  \frac{E_{K_n}(z)}{\Delta(z)^n} \right)^{\ell^m} = q^{-n\ell^m} + \cdots .
\end{equation}

By Lemma \ref{l-divisibility}, there are series $f_0, \ldots, f_m\in \mathbb{Z}\pq$ such that
\[
\left( \frac{E_{K_n}(z)}{\Delta(z)^n} \right)^{\ell^m} = f_{0,n}(q^{\ell^m}) + \ell\cdot f_{1,n}(q^{\ell^{m-1}}) + \cdots + \ell^m\cdot f_{m,n}(q).
\] 
In light of (\ref{expansion}), $f_{0,n}, \ldots, f_{m,n}$ can be chosen so that 
\[
f_{i,n}(q^{\ell^{m-i}}) = \sum_{N\geq -n\ell^i}c_{i,n}(N)q^{N\ell^{m-i}},
\]
where $c_{0,n}(-n) =1$ and $c_{i,n}(-n\ell^i)= 0$ if $i \ne 0$.
Thus,
\begin{multline*}
\left( \frac{E_{K_n}(z)}{\Delta(z)^n} \right)^{\ell^m} = \\
\sum_{N\geq -n}c_{0,n}(N)q^{N\ell^m} + \ell \sum_{N>-n\ell} c_{1,n}(N)q^{N\ell^{m-1}} + \cdots + \ell^m \sum_{N>-n\ell^m}c_{m,n}(N)q^{N}.
\end{multline*}
Then the constant term of $\left( \frac{E_{K_n}(z)}{\Delta(z)^n} \right)^{\ell^m}\cdot  \Delta(z)^{\delta_{k,\ell,m}} $ satisfies 
\[
 \sum_{i=0}^m \ell^i\cdot const \left[ \left(\sum_{N\geq-n\ell^i}c_{i,n}(N)q^{N\ell^{m-i}} \right)\left( \sum_{N\geq 0} \tau_{k,m}(N)q^N \right) \right] \equiv 0 \pmod{\ell^m}
\]
by (\ref{zero}).  For each $0\leq i \leq m$, 
\begin{multline}\label{tau3}
\ell^i\cdot const\left[  \left(\sum_{N\geq-n\ell^i}c_{i,n}(N)q^{N\ell^{m-i}} \right)\left( \sum_{N\geq0} \tau_{k,m}(N)q^N \right) \right] \\
= \ell^i\cdot  \sum_{j=0}^{n\ell^i}c_{i,n}(-j)\tau_{k,m}(j\ell^{m-i}).
\end{multline}
If $m>1$, using Lemma \ref{m-1} gives that for all $1\leq r \leq m-1$,
\begin{equation}\label{tau0}
\tau_{k,m}(\ell^rn)\equiv 0 \pmod{\ell^r},
\end{equation} 
for all $n\geq 0$.  But when $1\leq i\leq m$, (\ref{tau0}) gives that $\tau_{k,m}(j\ell^{m-i})\equiv 0 \pmod{\ell^{m-i}}$ for any $j\geq 0$.  It follows that (for any $m\geq 1$)
\begin{equation}\label{tau2}
\ell^i \cdot  \tau_{k,m}(j\ell^{m-i})\equiv 0 \pmod{\ell^{m}}.
\end{equation}

Taking $i=0$ we see that for any $n\geq \frac{k \ell^m +2\ell+2}{24}$,
\begin{equation}\label{almost}
\sum_{j=0}^{n}c_{0,n}(-j)\tau_{k,m}(j\ell^{m}) \equiv 0 \pmod{\ell^m}.
\end{equation}
However, if
\[
p_k(\ell^mn-\delta_{k,\ell,m})\equiv 0 \pmod{\ell^m}
\]
for integers $0\leq n < \frac{k \ell^m +2\ell+2}{24}$, then by the final remark of Lemma \ref{m-1} 
\[
\tau_{k,m}(\ell^mn)\equiv 0 \pmod{\ell^m}
\]
for $0\leq n < \frac{k \ell^m +2\ell+2}{24}$ as well.  Combining this with (\ref{almost}) we have that for all $n\geq 0$,
\begin{equation}\label{tau4}
\sum_{j=0}^{n}c_{0,n}(-j)\tau_{k,m}(j\ell^{m}) \equiv 0 \pmod{\ell^m}.
\end{equation}
 
Using the fact that $c_{0,n}(-n) =1$, we obtain by induction on $n$ that 
\[
\tau_{k,m}(n\ell^m)\equiv 0 \pmod{\ell^m}
\]
 for all $n\geq 0$. Indeed for $n = 0$ we have 
\[
c_{0,0}(-0)\tau_{k,m}(0)  \equiv 0 \pmod{\ell^m}
\]
and since $c_{0,0}(-0) =1$, we get that 
\[
\tau_{k,m}(0)  \equiv 0 \pmod{\ell^m}.
\]
Suppose that the $(n-1)$th case has been proven. Then (\ref{tau4} ) can be written as 
\begin{align}\label{moon3}
c_{0,n}(-n)\tau_{k,m}(j\ell^{m})
+ \sum_{j=0}^{n-1}c_{0,n}(-j)\tau_{k,m}(j\ell^{m}) 
 \equiv 0 \pmod{\ell^m}.
\end{align}
By induction
$$ 
\sum_{j=0}^{n-1}c_{0,n}(-j)\tau_{k,m}(j\ell^{m}) \equiv 0 \pmod{\ell^m}.
$$
Also, $c_{0,n}(-n) = 1$. Therefore by Equation (\ref{moon3}), we have 
$$
\tau_{k,m}(n\ell^{m}) \equiv 0 \pmod{\ell^m}
$$
for all $n\ge 0$ as desired. Combining this with (\ref{tau0}), we have for  all $1 \le r \le m$
$$
\tau_{k,m}(n\ell^{r}) \equiv 0 \pmod{\ell^r}
$$
for all $n\ge 0$.

Finally, by Lemma \ref{m-1} with $r=m$, we have a congruence at $(\ell^m,k,-\delta_{k,\ell,m})$, as desired.
\qed

\section{Examples of congruences for $p_k(n)$}\label{examples}

We now address the question of what types of congruences Theorem \ref{main} implies.  Using a theorem of Kiming and Olsson, we shall see that Theorem \ref{main} can be used to prove most congruences where $m=1$.  Note that the condition $k\equiv -4\pmod{{\ell}^{m-1}}$ is trivial for $m=1$.  Thus we have no restriction on the number of components, $k$.  In particular, the classical Ramanujan congruences for $k=1$, and $\ell=5,7,11$ all can be proven by Theorem \ref{main}, so it is in fact a generalization of the theorem of Lachterman, Schayer, and Younger in \cite{LSY}.

Kiming and Olsson define an \emph{exceptional} congruence to be one of the form
\[
p_k({\ell}n+a)\equiv 0 \pmod{{\ell}}
\]
where $1\leq k \leq \ell-1$ and $k\not\in\{{\ell}-1,\,{\ell}-3\}$.  They prove the following theorem in \cite{K-O}.
\begin{theorem}(Theorem 1 in \cite{K-O})
Let ${\ell} \ge 5$ be a prime number. If 
\[
p_k({\ell}n+a)\equiv 0\pmod{\ell}
\]
is an exceptional congruence, then $k$ is odd and $24a\equiv k \pmod {\ell}.$
\end{theorem}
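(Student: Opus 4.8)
The plan is to convert the arithmetic statement about $p_k$ into a statement about a single eta-power modulo $\ell$, and then read off both conclusions from the structure of that eta-power. First I would use the Frobenius congruence $\prod_{n\ge 1}(1-q^n)^\ell \equiv \prod_{n\ge 1}(1-q^{\ell n}) \pmod\ell$ to factor the generating function as
\[
\sum_{n\ge 0} p_k(n) q^n = \prod_{n\ge 1}(1-q^n)^{\ell-k}\cdot \prod_{n\ge 1}(1-q^n)^{-\ell} \equiv \Big(\sum_{t\ge 0} d(t) q^t\Big)\cdot \prod_{n\ge 1}(1-q^{\ell n})^{-1} \pmod\ell,
\]
where $d(t)$ denotes the coefficients of $\prod_{n\ge1}(1-q^n)^{\ell-k}$. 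Since the second factor is a power series in $q^\ell$ with constant term $1$, it is a unit in $\mathbb{F}_\ell[[q^\ell]]$ and does not mix residue classes of exponents modulo $\ell$. Hence the exceptional congruence $p_k(\ell n + a)\equiv 0 \pmod\ell$ for all $n$ holds if and only if $d(t)\equiv 0 \pmod\ell$ for every $t\equiv a \pmod\ell$; that is, the reduction of $\prod_{n\ge1}(1-q^n)^{\ell-k}$ modulo $\ell$ omits the entire arithmetic progression $a$.

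Next I would set $j=\ell-k$, so $1\le j\le \ell-1$ and the non-excluded cases are exactly $j\notin\{1,3\}$, and pass to the Dedekind eta function via $\prod_{n\ge1}(1-q^n)^{j}=q^{-j/24}\eta(z)^{j}$. Writing $e=t+j/24$ for the true exponent of $q$ in $\eta^j$ attached to $q^t$, and using that $24$ is invertible modulo $\ell$, the progression $t\equiv a\pmod\ell$ becomes the progression $e\equiv a+j/24\pmod\ell$. A direct computation shows that $24a\equiv k \pmod\ell$ is equivalent to $a+j/24\equiv 0 \pmod\ell$, and that $k$ is odd if and only if $j$ is even. Thus the theorem is reduced to the following purely structural claim about $\eta^j$ modulo $\ell$: \emph{if $\eta^j$ omits some residue class of exponents modulo $\ell$, then the omitted class is $e\equiv 0$ and $j$ is even.}

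To prove this claim I would regard $\eta^j$ as a modular form modulo $\ell$ and bring in the theta operator $\theta=q\,d/dq$, which multiplies the coefficient of $q^e$ by $e$ and raises the filtration by $\ell+1$ except at the ends of the theta cycle. The identity $\theta^{\ell-1}\eta^j=\eta^j-(\text{the }e\equiv 0\text{ part of }\eta^j)$ shows that omitting a \emph{nonzero} class $c$ would force $\eta^j$, together with its theta-translates, to lie in a space small enough that $\eta^j$ is a combination of unary theta series, i.e.\ lacunary; for level one this happens only when $j\in\{1,3\}$, which are precisely the excluded cases. This pins the omitted class to $e\equiv 0$, and the parity statement then follows by matching the weight $j/2$ against the filtration at the relevant stage of the theta cycle, where integrality rules out odd $j$ (equivalently, for odd $j$ one is in half-integral weight, where a single omitted class forces a Serre--Stark type theta decomposition that is again unavailable for $j\notin\{1,3\}$). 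I expect this last paragraph to be the main obstacle: the reduction in the first two steps is essentially formal, whereas determining exactly which progression can be omitted, together with the parity, rests on the theta-cycle and filtration theory of modular forms modulo $\ell$ and on the non-lacunarity of $\eta^{\ell-k}$ for $\ell-k\notin\{1,3\}$.
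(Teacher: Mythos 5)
First, a point of comparison: the paper does not prove this statement at all --- it is quoted as Theorem 1 of Kiming and Olsson \cite{K-O} and used as a black box --- so there is no internal proof to measure your attempt against; you are in effect reproving a cited external result. Your first two steps are correct and standard. Factoring out $\prod(1-q^n)^{-\ell}\equiv\prod(1-q^{\ell n})^{-1}\pmod{\ell}$, a unit power series in $q^{\ell}$ that cannot mix exponent classes modulo $\ell$, correctly reduces the congruence to the statement that $\prod(1-q^n)^{\ell-k}$ omits the progression $t\equiv a\pmod{\ell}$, and the translation of the two conclusions into ``the omitted exponent class of $\eta^{j}$, $j=\ell-k$, is $e\equiv 0$'' and ``$j$ is even'' is correct bookkeeping.

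The gap is in the third step, which carries all of the content of the theorem. (i) The consequence of omitting a \emph{nonzero} class $c$ is the linear relation $\sum_{i=1}^{\ell-1}c^{-i}\theta^{i}f=0$ among the theta-iterates (obtained by inverting the Vandermonde system over $\mathbb{F}_\ell^{\times}$); the identity $\theta^{\ell-1}f=f-f_{0}$ that you quote governs only the class $c=0$ and does not by itself confine $f$ to a small space. (ii) The assertion that such a relation forces $\eta^{j}$ to be lacunary, and that in level one this happens only for $j\in\{1,3\}$, is false as stated: by Serre's theorem $\eta^{2k}$ is lacunary for $2k\in\{2,4,6,8,10,14,26\}$, so for instance $\eta^{2}$ and $\eta^{4}$ are lacunary. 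What is special about $j=1,3$ is that $\eta^{j}$ is a unary theta series, and you have not shown that omitting a nonzero class implies anything that strong; the actual Kiming--Olsson argument instead derives a contradiction from the filtration pattern along the theta cycle, where the $\theta^{i}f$ have pairwise distinct filtrations except at controlled drops, so a nontrivial linear relation among them is impossible outside the excluded cases. (iii) Your choice of $\eta^{\ell-k}$ lands in half-integral weight precisely in the case ($k$ even) that must be ruled out, and the level-one filtration theory you invoke does not apply there; the nod to Serre--Stark (which concerns weight $1/2$) does not repair this. The standard fix, consistent with the machinery used elsewhere in this paper, is to work with $\Delta^{\delta_{k,\ell,1}}=\eta^{k(\ell^{2}-1)}$, which always has integral weight and level one, and run the theta-cycle analysis there; the parity of $k$ then emerges from the weight congruences along the cycle. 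As written, your proposal reduces the theorem correctly but does not prove it.
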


We can see that Theorem \ref{main} applies to every exceptional congruence by noticing that in this case
\[
\delta_{k,{\ell},1} = \frac{k{\ell}^2-k}{24} \equiv -a \pmod{\ell}.
\] 

We also find that Theorem \ref{main} can be used to prove a non-exceptional conguence.  First we recall the following result of Gandhi \cite{Gandhi}.

\begin{theorem}(Gandhi)
If ${\ell}\geq 5$ is prime, and $a=\frac{{\ell}^2-1}{8}$, then 
\[
p_{{\ell}-3}({\ell}n+a)\equiv 0  \pmod {\ell}.
\]
\end{theorem}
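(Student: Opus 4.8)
The plan is to recognize Gandhi's congruence as an instance of the $m=1$ case of Theorem \ref{main} and to reduce it to a finite, and for small primes trivial, verification. I would set $k=\ell-3$ and $m=1$. Since $\ell^{m-1}=1$, the congruence condition $k\equiv -4 \pmod{\ell^{m-1}}$ in Theorem \ref{main} is vacuous, and the range $1\le r<m$ is empty, so the theorem applies with no standing hypotheses. First I would record $\delta_{\ell-3,\ell,1}=\frac{(\ell-3)(\ell^2-1)}{24}$ and check that Gandhi's shift $a=\frac{\ell^2-1}{8}$ satisfies $a\equiv -\delta_{\ell-3,\ell,1}\pmod{\ell}$. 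This is immediate from the identity
\[
24\left(a+\delta_{\ell-3,\ell,1}\right)=3(\ell^2-1)+(\ell-3)(\ell^2-1)=\ell(\ell^2-1),
\]
which is divisible by $\ell$; since $\gcd(24,\ell)=1$ for $\ell\geq 5$, it follows that $\ell\mid a+\delta_{\ell-3,\ell,1}$.

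Next I would invoke the fact that $p_{\ell-3}$ vanishes at negative arguments, so that a congruence at $(\ell,\ell-3,a)$ is equivalent to a congruence at $(\ell,\ell-3,-\delta_{\ell-3,\ell,1})$. This is exactly the reasoning of Remark \ref{rem1}: the two arithmetic progressions cover the same residue class modulo $\ell$ and differ only in finitely many initial terms, all of which have negative argument in the $-\delta_{\ell-3,\ell,1}$ form. Thus Gandhi's theorem becomes the existence of a congruence at $(\ell,\ell-3,-\delta_{\ell-3,\ell,1})$, and applying the $m=1$ case of Theorem \ref{main} I reduce it to the finitely many checks
\[
p_{\ell-3}\!\left(\ell n-\delta_{\ell-3,\ell,1}\right)\equiv 0 \pmod{\ell}
\]
for all $0\le n<\frac{(\ell-3)\ell+2\ell+2}{24}=\frac{\ell^2-\ell+2}{24}$.

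The payoff is that most of these conditions are automatic: whenever $n<\delta_{\ell-3,\ell,1}/\ell$ the argument $\ell n-\delta_{\ell-3,\ell,1}$ is negative and $p_{\ell-3}$ vanishes for free. A short computation shows that the window of $n$ giving a non-negative argument has length $\frac{2\ell+3}{24}-\frac{1}{8\ell}$, which is less than $1$ for $\ell=5$ and $\ell=7$; in fact for $\ell=5$ (where $k=2$, $a=3$) and $\ell=7$ (where $k=4$, $a=6$) this window contains no integers at all, so every required congruence holds trivially and Theorem \ref{main} delivers Gandhi's congruence with no further work. For $\ell=11$ and $\ell=13$ the window contains exactly one integer, leaving single checks such as $p_8(4)\equiv 0\pmod{11}$ (indeed $p_8(4)=726=11\cdot 66$), each verified directly from the generating function (\ref{genfn}).

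The main obstacle is that this reduction does not by itself settle Gandhi's theorem uniformly in $\ell$: the nontrivial window has length on the order of $\ell/12$, so for large primes it contains a growing number of integers, each demanding divisibility of $p_{\ell-3}$ by $\ell$ at an argument as large as roughly $\ell^2/12$. These boundary congruences are of the same nature as the statement being proved, so establishing them for all $\ell$ lies outside the finite-check scheme. What Theorem \ref{main} cleanly accomplishes is a complete, computation-free proof of Gandhi's congruence for the smallest primes, together with a finite (if $\ell$-dependent) certificate in general, thereby showing that the method of Lachterman, Schayer, and Younger extends to non-exceptional congruences as well.
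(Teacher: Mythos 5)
The paper does not actually prove this statement: it is quoted from Gandhi [Gan63], and the only thing the surrounding text establishes is the congruence $\delta_{\ell-3,\ell,1}\equiv -a \pmod{\ell}$, which shows that Gandhi's congruences are of the shape to which Theorem \ref{main} applies. Your verification of that congruence (via $24(a+\delta_{\ell-3,\ell,1})=\ell(\ell^2-1)$) is correct and matches the paper's computation. But your proposal then tries to go further and derive Gandhi's theorem from Theorem \ref{main}, and here there is a genuine gap, which to your credit you identify yourself: for a general prime $\ell$ the reduction leaves a window of length roughly $\ell/12$ of unresolved congruences $p_{\ell-3}(\ell n-\delta_{\ell-3,\ell,1})\equiv 0\pmod{\ell}$ where the argument is non-negative, and these are instances of the very statement being proved. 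So what you have is a complete proof for $\ell\in\{5,7\}$ (empty window), a one-computation proof for $\ell\in\{11,13\}$ (and $p_8(4)=726=11\cdot 66$ is correct), and a finite certificate for each fixed $\ell$ --- but not a proof of the theorem as stated, for all primes $\ell\geq 5$.

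Two smaller points. First, your claim that the progressions $\ell n+a$ and $\ell n-\delta_{\ell-3,\ell,1}$ ``differ only in finitely many initial terms, all of which have negative argument'' fails once $a\geq\ell$: for $\ell=11$ the shifted progression contains $4=11\cdot 4-40$, which is non-negative and not of the form $11n+15$ with $n\geq 0$. Remark \ref{rem1} only covers shifts between two values of the form $-\delta_{k,\ell,r}$, both nonpositive. This does not break your argument, because the direction you actually use --- a congruence at $(\ell,\ell-3,-\delta_{\ell-3,\ell,1})$ implies one at $(\ell,\ell-3,a)$ --- is the trivial inclusion of progressions, and the extra non-negative terms are exactly absorbed into the finite checks of Theorem \ref{main}; but the asserted equivalence should be dropped. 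Second, a uniform proof requires a different idea: Gandhi's argument writes $\prod(1-q^n)^{-(\ell-3)}\equiv \prod(1-q^n)^{3}\cdot\prod(1-q^{\ell n})^{-1}\pmod{\ell}$, applies Jacobi's identity $\prod(1-q^n)^3=\sum_{j\geq 0}(-1)^j(2j+1)q^{j(j+1)/2}$, and notes that $j(j+1)/2\equiv a\pmod{\ell}$ with $8a=\ell^2-1$ forces $(2j+1)^2\equiv 0\pmod{\ell}$, so every contributing term carries a factor of $\ell$. That argument is short, works for all $\ell$ simultaneously, and lies entirely outside the finite-check framework of Theorem \ref{main}.
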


If $a=\frac{{\ell}^2-1}{8}$, then since $\ell ^2 - 1$ is divisible by $24$ we have $a=3b$ for some positive integer $b$.  Thus,
\[
\delta_{\ell-3, \ell, 1} = \frac{(\ell-3)(\ell^2-1)}{24} = \frac{\ell - 3}{3}\cdot a = (\ell -3)b \equiv -a \pmod{\ell},
\]
and so Theorem \ref{main} applies to these non-exceptional cases of Gandhi as well.

\subsection{All Applicable Congruences for Small Primes}

We conclude with a list of all infinite families of congruences that can be proven with Theorem \ref{main} for ${\ell}\le 13$ and $m\le 2$.  

\begin{theorem}\label{congruences} For all integers $r, n\geq 0$,
\begin{equation}
p_{2+5r}(5n+3)\equiv 0 \pmod{5}
\end{equation}
\begin{equation}
p_{1+5r}(5n+4)\equiv 0 \pmod{5}
\end{equation}

\begin{equation}
p_{1+7r}(7n+5)\equiv 0 \pmod{7}
\end{equation}
\begin{equation}
p_{4+7r}(7n+6)\equiv 0 \pmod{7}
\end{equation}

\begin{equation}
p_{8+11r}(11n+4)\equiv 0 \pmod{11}
\end{equation}
\begin{equation}
p_{1+11r}(11n+6)\equiv 0 \pmod{11}
\end{equation}
\begin{equation}
p_{3+11r}(11n+7)\equiv 0 \pmod{11}
\end{equation}
\begin{equation}
p_{5+11r}(11n+8)\equiv 0 \pmod{11}
\end{equation}
\begin{equation}
p_{7+11r}(11n+9)\equiv 0 \pmod{11}
\end{equation}

\begin{equation}
p_{10+13r}(13n+8)\equiv 0 \pmod{13}
\end{equation}
and

\begin{equation}
p_{11+5^2r}(5^2n+14)\equiv 0 \pmod{5^2}
\end{equation}
\begin{equation}
p_{6+5^2r}(5^2n+19)\equiv 0 \pmod{5^2}
\end{equation}
\begin{equation}
p_{1+5^2r}(5^2n+24)\equiv 0 \pmod{5^2}
\end{equation}

\begin{equation}
p_{95+11^2r}(11^2n+9)\equiv 0 \pmod{11^2}
\end{equation}
\begin{equation}
p_{84+11^2r}(11^2n+64)\equiv 0 \pmod{11^2}
\end{equation}
\begin{equation}
p_{7+11^2r}(11^2n+86)\equiv 0 \pmod{11^2}
\end{equation}
\begin{equation}
p_{29+11^2r}(11^2n+97)\equiv 0 \pmod{11^2}
\end{equation}
\begin{equation}
p_{51+11^2r}(11^2n+108)\equiv 0 \pmod{11^2}
\end{equation}
\begin{equation}
p_{73+11^2r}(11^2n+119)\equiv 0 \pmod{11^2}.
\end{equation}

\end{theorem}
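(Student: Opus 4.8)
The plan is to treat the nineteen congruences as a finite collection of base cases, each of which is first established for a single representative number of components $k$ by a finite computation via Theorem~\ref{main}, and then propagated to the full arithmetic progression $k+\ell^m r$ by the Kiming--Olsson machinery of Lemma~\ref{KOgeneralization} (equivalently Corollary~\ref{nice2}). The congruences $(1)$--$(10)$ are the $m=1$ families (for $\ell=5,7,11,13$) and $(11)$--$(19)$ are the $m=2$ families (for $\ell=5,11$). For each family I would first record the base value of $k$ (the smallest one listed), compute $\delta_{k,\ell,m}$, and verify the arithmetic identity $-\delta_{k,\ell,m}\equiv a\pmod{\ell^m}$ matching the residue $a$ in the stated congruence; this is what lets the congruence at $(\ell^m,k,-\delta_{k,\ell,m})$ be rewritten in the shifted form $p_k(\ell^m n+a)\equiv 0\pmod{\ell^m}$ after reindexing $n$ and using $p_k(\cdot)=0$ on negative arguments.

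For the ten $m=1$ families the condition $k\equiv -4\pmod{\ell^{m-1}}$ is vacuous, so I would invoke the $m=1$ case of Theorem~\ref{main} directly: it suffices to verify, for the base $k$, that $p_k(\ell n-\delta_{k,\ell,1})\equiv 0\pmod{\ell}$ for all $0\le n<\frac{k\ell+2\ell+2}{24}$. This is a finite check whose coefficients I would read off from the power series $\prod_{n\ge 1}(1-q^n)^{-k}$ truncated to the relevant degree. Having obtained the congruence at $(\ell,k,-\delta_{k,\ell,1})$, Lemma~\ref{KOgeneralization} with $m=1$ immediately yields the congruence at $(\ell,k+\ell r,-\delta_{k,\ell,1})$ for every $r\ge 0$ with $k+\ell r\ge 1$; since the residue $a=-\delta_{k,\ell,1}\bmod\ell$ is held fixed across the family, this is precisely the stated infinite family in $r$.

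For the nine $m=2$ families I would proceed through Corollary~\ref{nice}. First I would confirm that each base $k$ satisfies $k\equiv -4\pmod{\ell}$, so that the hypotheses of Theorem~\ref{main} apply at $m=2$; one checks, for instance, $11\equiv -4\pmod 5$ and $7,29,51,73,84,95\equiv -4\pmod{11}$. Then I would carry out the two finite verifications demanded by Corollary~\ref{nice}: that $p_k(\ell n-\delta_{k,\ell,1})\equiv 0\pmod{\ell}$ for $0\le n<\frac{k\ell+2\ell+2}{24}$, and that $p_k(\ell^2 n-\delta_{k,\ell,2})\equiv 0\pmod{\ell^2}$ for $0\le n<\frac{k\ell^2+2\ell+2}{24}$. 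Corollary~\ref{nice} then gives the congruence at $(\ell^2,k,-\delta_{k,\ell,2})$, and Corollary~\ref{nice2} promotes it to a congruence at $(\ell^2,k+\ell^2 r,-\delta_{k+\ell^2 r,\ell,2})$ for all $r\ge 0$; here one uses that $\delta_{k+\ell^2 r,\ell,2}\equiv\delta_{k,\ell,2}\pmod{\ell^2}$ (because $24\mid\ell^4-1$), so the residue $a$ is again constant along the family and the stated congruence follows after the usual reindexing of $n$.

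The only genuine work is the finite computation, and that is also where the main obstacle lies: the bounds $\frac{k\ell^2+2\ell+2}{24}$ grow with $k$, so the $m=2$ families (especially the $\ell=11$ cases with $k$ as large as $95$) require computing multipartition numbers $p_k$ out to degrees in the hundreds and checking divisibility by $\ell^2$, which is best organized as a single truncated power-series computation for each base pair rather than by hand. A secondary point, implicit in the surrounding claim that this list is complete for $\ell\le 13$ and $m\le 2$, is that one should also run the finite test of Theorem~\ref{main} over all eligible base pairs $(\ell,k)$ --- that is, all residues $k$ in a fundamental range modulo $\ell^m$, subject to $k\equiv -4\pmod{\ell^{m-1}}$ when $m=2$ --- and confirm that no families beyond those listed pass the test; this is again a bounded search and introduces no new ideas.
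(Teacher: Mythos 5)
Your proposal is correct and matches the paper's (implicit) proof exactly: each base case is verified by the finite computation prescribed by Theorem~\ref{main} (for $m=1$) or Corollary~\ref{nice} (for $m=2$, after checking $k\equiv -4\pmod{\ell}$), and the infinite family in $r$ then follows from Lemma~\ref{KOgeneralization} via Corollary~\ref{nice2}, with the residue $a$ fixed because $\delta_{k+\ell^m r,\ell,m}\equiv\delta_{k,\ell,m}\pmod{\ell^m}$. Your arithmetic identifications $-\delta_{k,\ell,m}\equiv a\pmod{\ell^m}$ for all nineteen families are also correct.
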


\end{document}